\renewcommand{\geq}{\geqslant}
\renewcommand{\leq}{\leqslant}
\newtheorem{theorem}{Theorem}
\newtheorem{lemma}[theorem]{Lemma}
\newtheorem{cor}[theorem]{Corollary}
\newtheorem*{cor*}{Corollary}
\begin{document}
\title{Non-solvable torsion free virtually solvable groups}

\author{Jonathan A. Hillman }
\address{School of Mathematics and Statistics\\
     University of Sydney, NSW 2006\\
      Australia }

\email{jonathanhillman47@gmail.com}

\begin{abstract}
There are perfect Bieberbach groups of Hirsch length 15,
but none in lower dimensions.
We shall show that a non-solvable, torsion free, 
virtually solvable group $S$ must have Hirsch length $h(S)\geq10$.
If $h(S)\leq13$ then we may assume that $A_5$ is the only simple factor,
but $PSL(2,7)$ and $SL(2,8)$ may occur when $h(S)\geq14$.
There are no known examples with $h(S)<15$.
\end{abstract}

\keywords{Hirsch length, nilpotent, non-solvable,  perfect, simple,
torsion free, virtually polycyclic}

\subjclass{20F16}

\maketitle

We shall consider here the question:
{\sl What is the smallest torsion free virtually solvable group 
which is not solvable?}
Here ``smallest" should be interpreted as having minimal Hirsch length.
Plesken showed that 15 is the smallest dimension in which there are 
non-solvable Bieberbach groups \cite{Pl89},
and Lutowski and Szczepa\'nski have given two explicit examples 
of such groups,
one with holonomy $A_5=PSL(2,5)$ and another with holonomy $PSL(2,7)$ \cite{LuS}.
The question remains open for groups which are not virtually abelian. 
We may assume that $S$ is finitely generated and perfect, 
and that the quotient of $S$ by its maximal solvable normal subgroup $\widetilde{S}$ 
is a minimal simple group.
We shall show that if there is such a group $S$ with Hirsch length $h(S)<15$
then $h(S)\geq10$ and $S/\widetilde{S}\cong{A_5}$,  $PSL(2,7)$ or $SL(2,8)$.
In the latter two cases $h(S)\geq14$. 
If $S$ is virtually nilpotent then its Fitting subgroup has nilpotency class $\leq3$.
Our arguments rest upon the groups in question having finite perfect quotients 
which act effectively on a free abelian group of small rank. 
We shall not consider whether there are examples of Hirsch length 15 
other than Bieberbach groups.

The first section is on notation and terminology,
and the next section contains some basic results on crystallographic quotients
of finitely generated virtually solvable groups.
In \S3 we introduce the notion of (minimal) TFNS group,
and in \S4 we use knowledge of the finite subgroups of $GL(k,\mathbb{Z})$ 
and $Sp(2\ell,\mathbb{Z})$ for $k$ and $\ell$ small \cite{Ki, PP}
to find the relevant minimal perfect groups and their representations.
In the next section we show that if $G$ is a finitely generated perfect subgroup of 
$GL(6,\mathbb{Q})$ which is an extension of a simple group $H$ 
by a solvable normal subgroup then $H\cong{A_5}$ or $PSL(2,7)$.
In \S6 we use some commutative algebra to show that if a torsion free,
virtually solvable group $S$ is neither solvable nor virtually nilpotent then $h(S)\geq9$. 
In \S7 we consider the commutator pairing for nilpotent groups,
and we apply this work in \S8 to torsion free virtually nilpotent groups 
which are not solvable.
The condition that $S$ be torsion free is used in some of the early lemmas and in \S6, 
but is most prominent in \S9,
where we show that $h(S)\geq10$, 
and limit the structure of the Fitting subgroup when $S$ is virtually nilpotent.
The final brief section contains a few questions.

I would like to thank R. Lutowski for his advice on representations of
finite perfect groups and on the associated crystallographic groups,
and D. Taylor for pointing out gaps in my first attempt at proving Theorem \ref{no sl(2,8)},
which prompted rewriting \S5.

\section{generalities}

If $G$ is a group then $|G|$, $G'$, $X^2(G)$ and $ \zeta{G}$ shall denote the order,
commutator subgroup,  subgroup generated by squares 
and centre of $G$, respectively,
and $C_n$ shall denote the cyclic group of order $n$.
(Thus $G'\leq{X^2(G)}$, since $G/X^2(G)$ is an elementary abelian 2-group.)
Let $I(G)$ be the {\it isolator\/} of $G'$ in $G$,
so that $G/I(G)$ is the maximal torsion free abelian quotient of $G$.
If $J$ is a subgroup of $G$ then $C_G(J)$ is the centralizer of $J$ in $G$.

Let $G^{(0)}=G$ and $G^{(n+1)}={G^{(n)}}'$ be the terms of the derived series for $G$, 
and let $\gamma_1G=G$ and $\gamma_{n+1}G=[G,\gamma_nG]$ be the terms
of the lower central series for $G$.
(This notation is from \cite{Ro}.)
Let $\sqrt{G}$ denote the Hirsch-Plotkin radical of $G$.
If $G$ is virtually polycyclic then $\sqrt{G}$ is the unique maximal nilpotent
normal subgroup of $G$, and is also known as the Fitting subgroup of $G$.

A finite perfect group is {\it minimal\/} if it is nontrivial and
all of its proper subgroups are solvable.
The minimal simple groups are $PSL(2,q)$ for $q=2^p$ with $p$ a prime,
$q=3^\ell$ with $\ell$ odd, $q=p$ a prime such that $p^2\equiv-1$ mod (5),
a Suzuki group $Sz(2^p)$ with $p$ an odd prime or $PSL(3,3)$ \cite{Th}.
(We shall use ``simple"  to mean ``non-abelian and simple" throughout.)
Every finite simple group contains a minimal simple group \cite{BW}.
(This is not entirely obvious from the definitions!)
There appears to be no corresponding determination of minimal perfect groups.

A virtually solvable group $G$ has a maximal solvable normal subgroup 
$\widetilde{G}$ of finite index.
If $G$ is not solvable then the lowest term of a composition series 
for $G/\widetilde{G}$ is a finite simple group, 
and so contains a minimal simple group \cite{BW}.

An {\it outer action\/} of a group $H$ on a group $G$ 
is a homomorphism $\theta:H\to{Out(G)}$.
Such a homomorphism induces homomorphisms from $H$ to $Aut(G/G')$ 
and $Aut(\zeta{G})$.
We shall say that the action $\theta$ is {\it effective\/} if it is a monomorphism.

A {\it crystallographic\/} group $G$ is a group which is an extension $\xi$
of a finite group $H$ by a finitely generated free abelian group $A$, 
such that the action $\theta:H\to{Aut(A)}$ induced by conjugation 
in $G$ is effective.
The {\it holonomy group\/} of $G$ is the quotient $G/A$.
Such extensions are classified by $\theta$ and a cohomology class
$c(\xi)$ in $H^2(H;A^\theta)$, where $H$ acts on $A$ via $\theta$.
(We shall write just $H^2(H;A)$, when the action is clear.)
The semidirect product $A\rtimes_\theta{H}$ corresponds to $c(\xi)=0$.

A {\it Bieberbach\/} group is a torsion free crystallographic group.
A crystallographic group corresponding to an extension $\xi$
is a Bieberbach group if and only if  $c(\xi)$ has non-zero restriction
in $H^2(C;A)$ for every cyclic subgroup $C<H$ of prime order $>1$
\cite[Theorem 3.1]{Sz}.

While the determination of all the finite subgroups of $GL(n,\mathbb{Z})$
requires considerable computation once $n>3$ \cite{CS,PP},
it is quite easy to estimate the maximal possible orders of such subgroups.
We shall summarize the account in \cite{KP}.
If $p$ is a prime then the maximal power of $p$ dividing the order 
of a finite subgroup $G<GL(n,\mathbb{Q})$ is  
$e_n(p)=\Sigma_{j\geq0}\lfloor{\frac{n}{p^j(p-1)}}\rfloor$.
In particular,  $e_n(p)=0$ if $p>n+1$,
while if $n<12$ then $e_n(7)\leq1$ and $e_n(5)\leq2$,
and if $12\leq{n}\leq14$ then $e_n(11)=e_n(13)=1$, 
$e_n(7)=2$ and $e_n(5)=3$.
Moreover, if $G$ is cyclic and of prime power order $p^k\geq3$ then 
$p^{k-1}(p-1)\leq{n}$.
If $G$ is cyclic of composite order and $|G|\not\equiv2~mod~(4)$ then the sum 
of such terms corresponding to the prime powers dividing the order is at most $n$, 
while if $|G|\equiv2~mod~(4)$ this sum is at most $n+1$ \cite{KP}.

\section{crystallographic quotients}

We have adapted the next two lemmas for our needs.

\begin{lemma}
\label{crys}
\cite[Theorem 2.2]{Sz}
Let $G$ be a finitely generated, virtually abelian group.
Then $G$ is a crystallographic group if and only if it has no non-trivial finite normal subgroup.
In that case $\sqrt{G}$ is  free abelian and is
the maximal normal abelian subgroup of $G$,
and $G$ has holonomy $H=G/\sqrt{G}$.
If $H$ is not solvable then $h(G)\geq4$.
\end{lemma}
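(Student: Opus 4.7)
Here is my plan. Suppose first that $G$ is crystallographic, given by an extension $1\to A\to G\to H\to 1$ with $A$ free abelian, $H$ finite, and $\theta:H\to Aut(A)$ injective. Any finite normal subgroup $F\leq G$ satisfies $F\cap A=1$ since $A$ is torsion-free, so $[F,A]\subseteq F\cap A=1$. Hence $F$ centralizes $A$, its image in $H$ lies in the kernel of $\theta$, and effectiveness gives $F\subseteq A$, forcing $F=1$.

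For the converse, suppose $G$ has no non-trivial finite normal subgroup. As $G$ is finitely generated and virtually abelian it is polycyclic, and the core of any finite-index abelian subgroup is a finite-index normal abelian subgroup $A_0$. Since $A_0$ is nilpotent and normal it lies in $\sqrt{G}$, so $\sqrt{G}$ has finite index. The torsion subgroup of $\sqrt{G}$ is finite and characteristic, hence normal in $G$ and trivial, so $\sqrt{G}$ is torsion-free. Since $A_0$ is normal of finite index in $\sqrt{G}$, every element of $\sqrt{G}$ has a power in $A_0\subseteq C_{\sqrt{G}}(A_0)$; as centralizers in a torsion-free nilpotent group are isolated (Mal'cev), this gives $C_{\sqrt{G}}(A_0)=\sqrt{G}$, so $A_0\subseteq\zeta(\sqrt{G})$ and $\sqrt{G}/\zeta(\sqrt{G})$ is finite. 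Schur's theorem and torsion-freeness then force $\sqrt{G}$ to be (free) abelian. Every normal abelian subgroup of $G$ is nilpotent and normal, hence contained in $\sqrt{G}$, so $\sqrt{G}$ is the maximal normal abelian subgroup. A second application of Schur to $C:=C_G(\sqrt{G})$, which is normal in $G$, contains $\sqrt{G}$ in its centre, and has finite quotient $C/\sqrt{G}$, yields $C'$ finite; as $C'$ is characteristic in the normal $C$ it is a finite normal subgroup of $G$, hence trivial, so $C$ is normal abelian. Maximality forces $C=\sqrt{G}$, which is the effectiveness of $\theta:G/\sqrt{G}\to Aut(\sqrt{G})$; thus $G$ is crystallographic with translation subgroup $\sqrt{G}$ and holonomy $G/\sqrt{G}$.

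For the final assertion, if $H=G/\sqrt{G}$ is non-solvable then the last non-trivial term of its derived series is a non-trivial finite perfect group $P$, which $\theta$ embeds into $GL(h(G),\mathbb{Z})\subseteq GL(h(G),\mathbb{Q})$; Lemma~\ref{sl2} then forces $h(G)\geq 4$. The main obstacle is the step showing $\sqrt{G}$ is abelian, which rests on Mal'cev's isolation property for centralizers in torsion-free nilpotent groups; everything else is bookkeeping with Schur's theorem and the no-finite-normal-subgroup hypothesis.
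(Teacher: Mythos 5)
Your proof is correct and follows essentially the same route as the paper: both directions hinge on Schur's theorem applied to the centralizer of a finite-index abelian normal subgroup, the triviality of finite normal subgroups to kill the resulting finite commutator subgroups, and Lemma~\ref{sl2} for the final bound $h(G)\geq4$. The only real difference is that where the paper disposes of the abelianness of $\sqrt{G}$ with ``by the same reasoning,'' you supply an explicit argument via Mal'cev's isolation of centralizers in torsion-free nilpotent groups to place $A_0$ in $\zeta(\sqrt{G})$ before invoking Schur, which is a legitimate (and slightly more detailed) way to fill that step.
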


\begin{proof}
We may assume that $G$ is an extension of a finite group $H$ 
by a finitely generated free abelian normal subgroup $A$.
Then $A$ has finite index in $\sqrt{G}$, 
and so $\sqrt{G}$ is nilpotent and has finite torsion subgroup.

Let $C=C_G(A)$ be the centralizer of $A$ in $G$.
Then $G/C$ acts effectively on $A$.
Since $[C:A]$ is finite, $C'$ is finite,
by a theorem of Schur \cite[10.1.4]{Ro}.
Hence if $G$ has no non-trivial finite normal subgroup then
$C'=1$ and $C$ is free abelian.
By the same reasoning, $\sqrt{G}$ is abelian and so $C=\sqrt{G}$
is the maximal abelian normal subgroup of $G$.
Since $G/\sqrt{G}=G/C$ acts effectively on $A$ it acts effectively on $\sqrt{G}$.
Thus $G$ is a crystallographic group with holonomy $H_1=G/\sqrt{G}$.

Conversely,  suppose that $G$ is a crystallographic group with holonomy $H$.
If $F$ is a finite normal subgroup of $G$ then $A\cap{F}=1$,
since $A$ is torsion free, and so $F$ projects injectively to $H$.
Moreover, since $A$ and $F$ are each normal,
$[A,F]\leq{A}\cap{F}=1$ and so $AF\cong{A}\times{F}$.
Since the action of $H$ on $A$ is effective 
we have $F=1$.

The final observation holds since finite subgroups of 
$GL(3,\mathbb{Q})$ have order dividing $48$, 
and so  are solvable.
\end{proof}

In this context the Fitting subgroup $\sqrt{G}$ is also called the 
{\it translation\/} subgroup of $G$.

\begin{lemma}
\label{HS?}
\cite[Prop.  4.1]{Sz}
Let $G$ be a crystallographic group with holonomy $H$.
If $H^1(G;\mathbb{Q})=0$ then ${Hom(\sqrt{G},\mathbb{Q})}^H=1$.
Hence the $\mathbb{Q}[G]$-module $\mathbb{Q}\otimes\sqrt{G}$
has no summand $\mathbb{Q}$ with trivial $H$-action.
\end{lemma}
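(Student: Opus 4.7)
The natural tool here is the Lyndon--Hochschild--Serre spectral sequence for the extension $1\to A\to G\to H\to 1$, where $A=\sqrt{G}$ is the translation subgroup. My plan is to read off the first assertion from its five-term exact sequence in low degrees with $\mathbb{Q}$ coefficients,
\[0\to H^1(H;\mathbb{Q})\to H^1(G;\mathbb{Q})\to H^1(A;\mathbb{Q})^H\to H^2(H;\mathbb{Q}),\]
which relates $H^1(G;\mathbb{Q})$ to the $H$-invariants $Hom(A,\mathbb{Q})^H$.

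The key observation is that $H$ is finite and $\mathbb{Q}$ is uniquely divisible, so the standard averaging (transfer) argument gives $H^n(H;\mathbb{Q})=0$ for all $n\geq 1$. The middle map in the five-term sequence is therefore an isomorphism, and since $A$ is free abelian we have $H^1(A;\mathbb{Q})=Hom(A,\mathbb{Q})$ with the natural $H$-action induced by conjugation. The hypothesis $H^1(G;\mathbb{Q})=0$ then forces $Hom(A,\mathbb{Q})^H=0$, giving the first assertion.

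For the second statement, note that since $A$ is abelian it acts trivially on itself by conjugation, so the $\mathbb{Q}[G]$-module structure on $V=\mathbb{Q}\otimes\sqrt{G}$ factors through the finite quotient $H$. By Maschke's theorem $V$ is semisimple as a $\mathbb{Q}[H]$-module, so a trivial summand $\mathbb{Q}$ exists if and only if there is a nonzero $H$-equivariant projection $V\to\mathbb{Q}$, equivalently a nonzero element of $Hom(A,\mathbb{Q})^H$. The first part rules this out.

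I do not anticipate any real obstacle: the argument is essentially one application of the five-term exact sequence together with Maschke's theorem, both leveraging the finiteness of the holonomy $H$ and the characteristic zero coefficients. The only thing to watch is the identification of the $H$-action on $H^1(A;\mathbb{Q})$ as the one induced from conjugation on $A$, which is standard.
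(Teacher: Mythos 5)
Your proof is correct and follows the paper's own argument exactly: the paper likewise invokes the five-term exact sequence of the LHS spectral sequence together with the vanishing of $H^1(G;\mathbb{Q})$ and $H^2(H;\mathbb{Q})$, and then deduces the second assertion from semisimplicity. Your write-up simply makes explicit the details (vanishing of $H^*(H;\mathbb{Q})$ by averaging, identification of the conjugation action) that the paper leaves implicit.
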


\begin{proof}
This follows from the exact sequence of low degree in the 
LHS spectral sequence for $G$ as an extension of $H$ by $\sqrt{G}$,
since $H^1(G;\mathbb{Q})=H^2(H;\mathbb{Q})=0$.
The second assertion is then clear.
\end{proof}

Let $N$ be a nilpotent group,
and let $tN$ be its torsion subgroup.
Then $N/tN$ is torsion free and has a  central series 
with torsion free abelian subquotients \cite[5.2.7 and 5.2.20]{Ro}.
Let 
\[
\tilde\gamma_1N=N>\dots>\tilde\gamma_cN=tN
\]
be the preimage in $N$ of the most rapidly descending such central series. 
Then $\tilde\gamma_2N=I(N)$.
Let $\mathbb{Q}N^{ab}=\mathbb{Q}\otimes{N/N'}=
\mathbb{Q}\otimes{N/\tilde\gamma_2N}$.

\begin{lemma}
\label{vnil}
Let $G$ be a torsion free virtually solvable group such that $h(G)$ is finite.
If $G/\sqrt{G}$ is a torsion group then the homomorphism from $G/\sqrt{G}$ 
to $Aut(\mathbb{Q}\sqrt{G}^{ab})$ induced by conjugation in $G$ 
is a monomorphism and  $G/\sqrt{G}$ is finite.
If $G$ is finitely generated then $G/I(\sqrt{G})$ is crystallographic.
\end{lemma}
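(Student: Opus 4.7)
My plan is to establish three things in turn: (i) the conjugation homomorphism $\theta\colon G/N\to Aut(\mathbb{Q}N^{ab})$ is injective, (ii) $G/N$ is finite, and (iii) under the extra hypothesis that $G$ is finitely generated, $G/I(N)$ is crystallographic. Throughout, set $V:=\mathbb{Q}N^{ab}$ (finite-dimensional over $\mathbb{Q}$ because $h(N)\leq h(G)<\infty$), and let $K$ be the preimage in $G$ of $\ker\theta$; so $K\supseteq N$, $[K,N]\subseteq I(N)$, and $K/N$ is torsion.

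For (i), the conjugation homomorphism $K\to Aut(N)$ has kernel $C_G(N)$ (which is contained in $K$) and image in the subgroup $Aut_1(N):=\ker(Aut(N)\to Aut(V))$. Via the Mal'cev completion $N_{\mathbb{Q}}$, $Aut_1(N)$ embeds in the unipotent radical of the $\mathbb{Q}$-algebraic group $Aut(N_{\mathbb{Q}})$, hence is torsion free. Therefore $K/C_G(N)$ is torsion free, while $K/N$ is torsion; their common quotient $K/(N\cdot C_G(N))$ is consequently trivial, so $K=N\cdot C_G(N)$.

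The main obstacle is then to show $H:=C_G(N)\subseteq N$, which gives $H=\zeta N$ and $K=N$. Here $H$ is torsion free, $\zeta N\subseteq \zeta H$, and $H/\zeta N\hookrightarrow G/N$ is torsion. Given $x,y\in H$ I would form $A:=\langle x,y,\zeta H\rangle$; its image $\langle\bar x,\bar y\rangle$ in $H/\zeta H$ is a finitely generated torsion subgroup of the virtually solvable group $H/\zeta H$, and hence finite (a finitely generated torsion virtually solvable group is finite). Since $\zeta H\subseteq \zeta A$, the quotient $A/\zeta A$ is finite too, so Schur's theorem gives $[A,A]$ finite. But $[x,y]\in[A,A]\subseteq G$, which is torsion free, so $[x,y]=1$. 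Hence $H$ is abelian; as an abelian (and therefore nilpotent) normal subgroup of $G$, $H\subseteq\sqrt{G}=N$, so $H=\zeta N$ and $K=N$.

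Part (ii) follows because $G/N$ is a torsion subgroup of $Aut(V)\cong GL_n(\mathbb{Q})$, and torsion subgroups of $GL_n(\mathbb{Q})$ are finite by a classical Jordan-type theorem. For (iii), finite generation of $G$ makes $G$ virtually polycyclic and $N/I(N)$ a finitely generated torsion free abelian group; the action of the finite group $G/N$ on $N/I(N)$ is effective because $N/I(N)\hookrightarrow V$ and $G/N$ acts faithfully on $V$. Hence $G/I(N)$ is crystallographic.
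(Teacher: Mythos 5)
Your parts (ii) and (iii), and your Schur--theorem argument that $H=C_G(N)$ is abelian and hence equals $\zeta{N}\leq N$, are sound and close in spirit to the paper's proof (which likewise quotes Robinson [8.1.11] for finiteness of $G/N$ and passes to $Aut(N/I(N))$ for the crystallographic claim). The problem is the key step of part (i): from ``$K/C_G(N)$ is torsion free and $K/N$ is torsion'' you conclude that the common quotient $K/(N\cdot C_G(N))$ is trivial. That inference is invalid. The group $K/(N\cdot C_G(N))$ is a \emph{quotient} of the torsion-free group $K/C_G(N)$, and quotients of torsion-free groups need not be torsion free (e.g.\ $\mathbb{Z}\to\mathbb{Z}/2\mathbb{Z}$); every group is a quotient of a free group. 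So all you can legitimately deduce is that $K/(N\cdot C_G(N))$ is a torsion group, which you already knew from $K/N$ being torsion. Nothing in your argument rules out the image of $K$ in $Aut_1(N)$ being a torsion-free group containing the image of $N$ as a proper subgroup of finite (or even infinite torsion) index, and then $K\ne N\cdot C_G(N)$.

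What is actually needed --- and what the paper does --- is to prove that the whole preimage $K$ of $\ker\theta$ is \emph{nilpotent} and then invoke the maximality of $N=\sqrt{G}$ to force $K=N$. The paper obtains nilpotency from the facts that $K$ acts trivially on each subquotient $\tilde\gamma_iN/\tilde\gamma_{i+1}N$ (these are torsion free, so rational triviality gives integral triviality), that $N/I(N)$ is central in $K/I(N)$ with torsion quotient, and that $G$ is torsion free, via Baer's extension of Schur's theorem [Ro, 14.5.1]. Your Mal'cev observation that $Aut_1(N)$ is torsion free (indeed unipotent, hence nilpotent) is a genuinely useful ingredient and could be pushed to a complete argument --- for instance by showing that each element of the image of $K$ has a power lying in $Inn(N)$ and hence, by taking logarithms in the unipotent group, itself lies in $Inn(N_{\mathbb{Q}})$, so that $K$ embeds in a nilpotent group compatibly with its action --- but as written the proof of injectivity has a genuine gap at exactly the point where the paper has to work hardest.
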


\begin{proof}
Since $r=h(\sqrt{G}^{ab})\leq{h(\sqrt{G})}=h(G)$,  it is finite.
Let $C$ be the kernel of the homomorphism from $G$ to
$Aut(\mathbb{Q}\sqrt{G}^{ab})\cong{GL(r,\mathbb{Q})}$ induced by conjugation in $G$.
Then $\sqrt{G}\leq{C}$ and $C/I(\sqrt{G})$ contains 
$\sqrt{G}/I(\sqrt{G})$ as a central subgroup of finite index.
Conjugation by elements of $C$ also induces the identity on 
the subquotients $\tilde\gamma_i\sqrt{G}/\tilde\gamma_{i+1}\sqrt{G}$, for all $i>1$.
Hence $C$ is nilpotent, 
by Baer's extension of a theorem of Schur \cite[14.5.1]{Ro}
and the fact that $C$ is torsion free.
Hence $C=\sqrt{G}$, by the maximality of $\sqrt{G}$,
and so the homomorphism induced by conjugation is a monomorphism.
Since the image of $G/\sqrt{G}$ is a torsion subgroup of $GL(r,\mathbb{Q})$
 it is finite \cite[8.1.11]{Ro}.
This monomorphism factors through $Aut(\sqrt{G}/I(\sqrt{G}))$,
and so $G/\sqrt{G}$ acts effectively on $\sqrt{G}/I(\sqrt{G})$.
If $G$ is finitely generated then $\sqrt{G}/I(\sqrt{G})$ 
$\cong\mathbb{Z}^r$, 
and so $G/I(\sqrt{G})$ is a crystallographic group.
\end{proof}

\begin{lemma}
\label{HPnilp}
Let $G$ be a torsion free solvable group such that $h(G)$ is finite.
Then $\sqrt{G}$ is nilpotent, and $C_G(\sqrt{G})=\zeta\sqrt{G}$.
\end{lemma}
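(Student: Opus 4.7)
First I would prove that $\sqrt{S}$ is nilpotent. As a subgroup of $S$, $\sqrt{S}$ is torsion free and of Hirsch length $\leq h(S)<\infty$, and by definition of the Hirsch--Plotkin radical it is locally nilpotent. A standard result asserts that a torsion free locally nilpotent group of finite Hirsch length is nilpotent (see \cite{Ro}): in the finitely generated, hence polycyclic, case this is immediate because a finitely generated locally nilpotent group is nilpotent; the general case proceeds by bounding the upper central series in terms of the Hirsch length. Once this is established, $\sqrt{S}$ is nilpotent, and since every nilpotent group is locally nilpotent, $\sqrt{S}$ is then the unique maximal nilpotent normal subgroup of $S$.

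For the centralizer identity, set $N=\sqrt{S}$ and $C=C_S(N)$. The inclusion $\zeta{N}\leq C$ is immediate, so it suffices to prove $C\leq N$, after which $C\leq N\cap C=\zeta{N}$ follows at once. Two observations are essential. First, $C$ is normal in $S$, since the centralizer of a normal subgroup is always normal; consequently each term $C^{(j)}$ of the derived series of $C$ is characteristic in $C$, and hence normal in $S$. Second, $\zeta{N}\leq\zeta{C}$, because any $z\in\zeta{N}$ commutes with $N$ by definition and with $C$ since $C$ centralizes $N$.

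I would then argue by descending induction on $j$ that every term of the derived series $C=C^{(0)}\geq C^{(1)}\geq\dots\geq C^{(k)}=1$ is contained in $N$. The base case $C^{(k)}=1\leq N$ is trivial. For the inductive step, suppose $C^{(j)}\leq N$. Then $C^{(j)}\leq N\cap C=\zeta{N}\leq\zeta{C}$, so $C^{(j)}$ is central in $C^{(j-1)}$; combined with the fact that $C^{(j-1)}/C^{(j)}$ is abelian, this forces $C^{(j-1)}$ to be a central extension of an abelian group and hence nilpotent of class at most two. Being characteristic in $C$ it is normal in $S$, and being nilpotent it lies in the Fitting subgroup $\sqrt{S}=N$ by part~(1). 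Descending to $j=0$ yields $C\leq N$, completing the proof.

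The main obstacle I anticipate is the first assertion, specifically the passage from local nilpotence to nilpotence in the case where $S$ is not finitely generated; this relies on a nontrivial structural fact about torsion free locally nilpotent groups of finite Hirsch length. Once that is in hand the centralizer identity drops out from the clean structural induction above, which rests only on the normality of $C$ in $S$, the centrality of $\zeta{N}$ in $C$, and the maximality of $\sqrt{S}$ among nilpotent normal subgroups.
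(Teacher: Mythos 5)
Your argument is correct, and the first assertion is handled exactly as in the paper: reduce to finitely generated subgroups of $\sqrt{S}$, which are nilpotent of class bounded by the Hirsch length because they are torsion free, and pass to the whole group. For the centralizer identity your route differs from the paper's. The paper works in the quotient $S/\sqrt{S}$: assuming $C=C_S(\sqrt{S})$ is not contained in $\sqrt{S}$, solvability of $S$ produces a single normal subgroup $D$ with $\sqrt{S}<D\leq{C}.\sqrt{S}$ and $D/\sqrt{S}$ abelian, and one checks that $D$ is nilpotent (its lower central series quickly falls into that of $\sqrt{S}$, since $C$ centralizes $\sqrt{S}$), contradicting maximality of the Hirsch--Plotkin radical. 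You instead run a descending induction along the entire derived series of $C$ itself, using that $N\cap{C}=\zeta{N}\leq\zeta{C}$ to see that each $C^{(j-1)}$ is nilpotent of class at most two, characteristic in $C$ and hence normal in $S$, so contained in $\sqrt{S}$. Your version has the mild advantage that each step only requires class-two nilpotence and does not need the nilpotency class of $\sqrt{S}$ as input (though you still need the first assertion, or at least the fact that nilpotent normal subgroups lie in $\sqrt{S}$, which holds by definition of the Hirsch--Plotkin radical); the paper's version reaches the contradiction in one step rather than by induction. Both arguments rest on the same three facts: solvability, normality of the relevant subgroups in $S$, and maximality of $\sqrt{S}$ among (locally) nilpotent normal subgroups.
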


\begin{proof}
If $N$ is a  finitely generated subgroup of $\sqrt{G}$ then $h(N)\leq{h}=h(G)$,
and so $\gamma_{h+1}N=1$, since $G$ is torsion free.
It follows immediately that $\gamma_{h+1}\sqrt{G}=1$, 
and so $\sqrt{G}$ is nilpotent.

Let $C=C_G(\sqrt{G})$, 
and suppose that $\sqrt{G}$ is a proper subgroup of $C.\sqrt{G}$.
Since $G$ is solvable it has a normal subgroup $D$
such that $\sqrt{G}<D\leq{C.\sqrt{G}}$ and $D/\sqrt{G}$ is abelian.
But then $D$ is nilpotent, 
contradicting the maximality of the Hirsch-Plotkin radical.
Hence $C=C_G(\sqrt{G})\leq\sqrt{G}$ and so $C=\zeta\sqrt{G}$.
\end{proof}

In the virtually polycyclic case $G/\sqrt{G}$ is virtually abelian \cite[15.1.6]{Ro}.
However, this not so for all finitely presentable solvable groups \cite{RS}.

If $G$ is virtually polycyclic and all of its abelian subnormal subgroups  
have rank $\leq{n}$ then $G/\sqrt{G}$ is virtually abelian of rank $<n$  
\cite[Theorem 2]{Wi}.
Hence $h(G/\sqrt{G})<h(\sqrt{G})$,
since $\sqrt{G}$ contains all abelian subnormal subgroups.
We may push this inequality a little further.

\begin{lemma}
\label{Wilson}
Let $G$ be a virtually polycyclic group, and let $V$ be a 
normal subgroup of finite index which contains $\sqrt{G}$
and such that $V/\sqrt{G}$ is abelian.
Let $C$ be the preimage in $G$ of the centre of $V/\sqrt{G}'$.
Then $h(G/\sqrt{G})<h(\sqrt{G}/\sqrt{G}'.V'\cap{C})$.
\end{lemma}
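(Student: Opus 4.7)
The plan is to exhibit $Q := V/\sqrt{G}$ as an abelian subgroup of $\mathrm{Aut}(B/B^Q)$, where $B := \sqrt{G}/\sqrt{G}'$, and then to invoke the Dirichlet-type bound that a finitely generated abelian subgroup of $GL(n,\mathbb{Z})$ has Hirsch length at most $n-1$. Writing $W = V/\sqrt{G}'$, we have a metabelian extension $1 \to B \to W \to Q \to 1$ in which $Q$ acts on $B$ by conjugation. Since $\sqrt{G}' \leq V' \cap C$, the subgroup appearing in the statement reduces to $K := V' \cap C$, with $K/\sqrt{G}' = W' \cap \zeta W$. Granted a faithful action of $Q$ on $B/B^Q$, the Dirichlet bound gives $h(Q) \leq h(B) - h(B^Q) - 1$, and combining this with $h(W' \cap B^Q) \leq h(B^Q)$ yields $h(Q) + h(K/\sqrt{G}') \leq h(B) - 1$, which rearranges to $h(G/\sqrt{G}) < h(\sqrt{G}/K)$.

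First I would show that $Q$ acts faithfully on $B$. If $J \leq V$ is the preimage of the kernel, then $J \triangleleft G$, $J \supseteq \sqrt{G}$, $J/\sqrt{G}$ is abelian, and $[J,\sqrt{G}] \leq \sqrt{G}'$. A short induction with the three-subgroup lemma yields $[J, \gamma_n \sqrt{G}] \leq \gamma_{n+1} \sqrt{G}$ for all $n \geq 1$, hence $\gamma_{n+1} J \leq \gamma_n \sqrt{G}$, so $J$ is nilpotent. Maximality of $\sqrt{G}$ among normal nilpotent subgroups forces $J = \sqrt{G}$, and a direct check then identifies $\zeta W$ with $B^Q$.

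The main obstacle is the upgrade to faithful action on $B/B^Q$. If $q \in Q$ satisfies $(q-1) B \leq B^Q$, then $(q-1)^2 B = 0$, so $q$ lies in the characteristic subgroup $U \leq Q$ of elements acting unipotently on $B$. Via the iterated commutator maps $B^{\otimes i} \twoheadrightarrow \gamma_i \sqrt{G} / \gamma_{i+1} \sqrt{G}$, the action of $U$ on each lower-central quotient of $\sqrt{G}$ is again unipotent. The standard fact that a nilpotent group extended by a nilpotent group acting unipotently on all of its lower central factors is itself nilpotent then shows that the preimage $\tilde U \triangleleft G$ is nilpotent, so $\tilde U = \sqrt{G}$ by maximality and $q = 1$. (The degenerate case $h(B/B^Q) = 0$ forces $Q$ to act trivially on $B \otimes \mathbb{Q}$, so $Q$ and $W'$ are both finite and the inequality reduces to $0 < h(B)$, true whenever $\sqrt{G}$ is infinite.)
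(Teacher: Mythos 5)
Your reduction and bookkeeping are largely in the right spirit --- the identification of $\zeta W$ with $B^Q$, the argument that $Q=V/\sqrt{G}$ acts faithfully on $B=\sqrt{G}/\sqrt{G}'$ (and indeed that no nontrivial element of $Q$ acts unipotently) via maximality of $\sqrt{G}$, and the final arithmetic all check out --- but the step you call the ``Dirichlet-type bound'' is a genuine gap. It is false that a finitely generated abelian subgroup of $GL(n,\mathbb{Z})$ has Hirsch length at most $n-1$: already for $n=4$ the matrices $\left(\begin{smallmatrix} I_2 & M\\ 0 & I_2\end{smallmatrix}\right)$ with $M\in M_2(\mathbb{Z})$ form a free abelian subgroup of rank $4$. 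Faithfulness of $Q$ on $B/B^Q$ does not rescue the statement in the form you invoke it, since you apply it to $\mathrm{Aut}(B/B^Q)$ abstractly; and even granting your elimination of elements acting unipotently on $B$, an element of $Q$ may still have a nontrivial unipotent Jordan part, so one cannot simply quote Dirichlet's unit theorem on eigenvalues. Controlling precisely this mixture of semisimple and unipotent behaviour is the substance of the result you would need, and it is not a one-line fact.

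The paper avoids this entirely by citing Wilson's theorem \cite[Theorem 2]{Wi}: if every abelian subnormal subgroup of a virtually polycyclic group has rank $\leq n$, then the quotient by the Fitting subgroup is virtually abelian of rank $<n$. Its proof of the lemma reduces to $G=V$, passes to $G/\sqrt{G}'$ so that $\sqrt{G}$ is abelian and $C=\zeta{G}$, observes that $G'\cap C$ is central so that $\sqrt{G/(G'\cap C)}=\sqrt{G}/(G'\cap C)$, and applies Wilson's theorem to $G/(G'\cap C)$. To make your approach self-contained you would in effect have to reprove Wilson's theorem in this metabelian setting, which is considerably more delicate than the bound you assumed.
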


\begin{proof}
We note first that $\sqrt{G}=\sqrt{V}$,
since $V$ is a normal subgroup and $\sqrt{G}\leq{V}$.
Hence we may assume that $G=V$, since $h(G)=h(V)$,
and so $G'\leq\sqrt{G}$.
The preimage of $\sqrt{G/\sqrt{G}'}$ in $G$ is $\sqrt{G}$
\cite[5.2.10]{Ro},
and so we may pass to the quotient $G/\sqrt{G}'$.
Hence we may also assume that $\sqrt{G}$ is abelian.
We then have $C=\zeta{G}\leq\sqrt{G}$ and
$\sqrt{G/(G'\cap{C})}=\sqrt{G}/(G'\cap{C})$,
since $G'\cap{C}$ is central.
Applying \cite[Theorem 2]{Wi} to $G/G'\cap{C}$ gives the result.
\end{proof}

This bound is sharp.
For example, if $K$ is a totally real number field of degree 5, 
then the group of integral units $\mathcal{O}_K^\times$ has rank 4,
and acts effectively on $\mathcal{O}_K$.
Hence $G=\mathcal{O}_K\rtimes\mathcal{O}_K^\times$ is 
virtually torsion free poly-$\mathbb{Z}$.
The abelian normal subgroup $\mathcal{O}_K$ is its own centralizer in $G$,
and so $\sqrt{G}=\mathcal{O}_K$ and $\zeta{G}=1$.
Hence  $h(G/\sqrt{G})=4<h(\sqrt{G})=5$.

The hypothesis that $G$ be virtually polycyclic is necessary.
If $m>1$ the Baumslag-Solitar group $G$ with presentation
$\langle{a,t}\mid{tat^{-1}=a^m}\rangle$ is solvable,
but is not polycyclic since
$\sqrt{G}=I(G)\cong\mathbb{Z}[\frac1m]$ is not finitely generated.
In this case $h(G/\sqrt{G})=h(\sqrt{G})=1$.

\section{minimal TFNS groups}

A group $G$ is {\it TFNS\/} if it is torsion free and virtually solvable but not solvable.
Since $G/\widetilde{G}$ is finite, 
$G$ has a finitely generated subgroup with the same
nonsolvable finite quotient.
A TFNS group t is {\it minimal\/} if it is finitely generated,  
$G/\widetilde{G}$ is a minimal simple group and $G$
has minimal Hirsch length for such groups.
(Cf. \cite[Definition 2.1]{LuS}.)
If $G$ is minimal TFNS then $h(G)\leq15$,
by the observations in the first paragraph of the introduction.
We shall assume henceforth that $S$ is a torsion free virtually solvable group
which is not virtually abelian.

If  $S$ is minimal TFNS then $S^{ab}$ is finite.
For otherwise there would be an epimorphism $\phi:S\to\mathbb{Z}$, 
and $\mathrm{Ker}(\phi)$ would not be solvable.
Since $\mathrm{Ker}(\phi)$ is virtually solvable, 
it has finitely generated subgroups which are non-solvable,  
but have Hirsch length $<h(S)$,
contradicting the minimality of $S$.
We can improve on this.

\begin{lemma}
\label{perf}
Let $G$ be a virtually solvable group such that $H=G/\widetilde{G}$ is perfect.
If $\widetilde{G}^{(n)}=1$ then $G^{(n)}$ is perfect,
and if $H$ is simple then $G^{(n)}/\widetilde{G^{(n)}}\cong{H}$.
\end{lemma}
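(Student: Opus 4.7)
The plan is to induct on $n$. The base case $n=0$ is immediate: $\widetilde{G}=1$ forces $G=H$ to be perfect, and $G^{(0)}=G$.

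For the inductive step, let $V := \widetilde{G}^{(n-1)}$, which is characteristic in $\widetilde{G}$ and hence normal in $G$, and is abelian. I pass to $\bar{G}:=G/V$, verifying first that $\widetilde{\bar G}=\widetilde{G}/V$: any solvable normal subgroup of $\bar G$ lifts to a solvable normal subgroup of $G$ containing $V$, hence contained in the solvable radical $\widetilde{G}$. Then $\widetilde{\bar G}^{(n-1)}=1$ and $\bar G/\widetilde{\bar G}\cong H$ is perfect, so by induction $\bar G^{(n-1)}$ is perfect. Unpacking $\bar G^{(k)}=G^{(k)}V/V$, perfectness of $\bar G^{(n-1)}$ yields $G^{(k)}V=G^{(n-1)}V$ for all $k\geq n-1$. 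Writing $U_k:=V\cap G^{(k)}$ (all normal in $G$, since $V$ and $G^{(k)}$ are), this translates to $G^{(n-1)}=G^{(n)}U_{n-1}$ and $G^{(n)}=G^{(n+1)}U_n$.

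The key step is to apply the standard commutator-subgroup formula $[AB,AB]=[A,A][A,B][B,B]$, valid when $A,B$ are both normal in the ambient group. Taking $A=G^{(n)}$ and $B=U_{n-1}$, the third term vanishes because $U_{n-1}\subseteq V$ is abelian, so
\[
G^{(n)}=[G^{(n-1)},G^{(n-1)}]=G^{(n+1)}\cdot[G^{(n)},U_{n-1}].
\]
A second application, using $G^{(n)}=G^{(n+1)}U_n$ and the fact that $U_n$ and $U_{n-1}$ both lie in the abelian $V$ (so commute), collapses the remaining commutator to $[G^{(n)},U_{n-1}]=[G^{(n+1)},U_{n-1}]$, which sits in $G^{(n+1)}$ by normality. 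Thus $G^{(n)}=G^{(n+1)}$, as desired.

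The second assertion now follows: $G^{(n)}\cap\widetilde{G}$ is a solvable normal subgroup of $G^{(n)}$, so it lies in $\widetilde{G^{(n)}}$. Conversely, when $H$ is simple, the image of $\widetilde{G^{(n)}}$ in $G^{(n)}/(G^{(n)}\cap\widetilde{G})\cong H$ is a solvable normal subgroup of a non-abelian simple group, hence trivial. Combined with $G^{(n)}\widetilde{G}=G$ (perfectness of $H$), this identifies $G^{(n)}/\widetilde{G^{(n)}}$ with $H$. The main obstacle is choosing the intermediate quotient: taking $V=\widetilde{G}^{(n-1)}$ is exactly what lets the abelianness of $V$ be used twice in the commutator expansion, each time as the crucial cancellation that brings everything down into $G^{(n+1)}$.
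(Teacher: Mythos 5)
Your proof is correct, but it takes a genuinely different route from the paper's. The paper argues in one step: set $T=G/G^{(n+1)}$, which is solvable of derived length $\leq n+1$; since $H$ is perfect and $T/f(\widetilde{G})$ is simultaneously a quotient of $H$ and solvable, $f(\widetilde{G})=T$; hence $T^{(n)}=f(\widetilde{G}^{(n)})=1$, which says exactly $G^{(n)}=G^{(n+1)}$. You instead induct on the derived length of $\widetilde{G}$, quotient by the last nonzero term $V=\widetilde{G}^{(n-1)}$ (after checking $\widetilde{G/V}=\widetilde{G}/V$), and then descend perfectness from $G^{(n-1)}V/V$ back to $G^{(n)}$ via Dedekind's law and the commutator identities for normal subgroups, with the abelianness of $V$ used twice to collapse the correction terms into $G^{(n+1)}$. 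Both arguments are sound; the paper's is shorter and induction-free, resting on the single observation that the solvable radical surjects onto every solvable quotient of $G$, while yours trades that for explicit elementary commutator bookkeeping. Your treatment of the second assertion, identifying $\widetilde{G^{(n)}}=G^{(n)}\cap\widetilde{G}$ via the two inclusions, is also correct and is in fact more detailed than the paper's one-line remark.
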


\begin{proof}
Let $f:G\to{T}=G/G^{(n+1)}$ be the natural epimorphism.
Then $f(\widetilde{G})$ is normal in $T$,
and $f$ induces an epimorphism from $H$ onto $T/f(\widetilde{G})$.
Since $H$ is perfect and $T$ is solvable it follows that $f(\widetilde{G})=T$,
and so $f$ induces epimorphisms $f^{(k)}:\widetilde{G}^{(k)}\to{T^{(k)}}$, 
for all $k\geq1$.
Hence $T^{(n)}=1$ and so $G^{(n)}=G^{(n+1)}$.
Thus $G^{(n)}$ is perfect.
The final assertion is clear, 
since $H$ is the only non-solvable quotient of $G$.
\end{proof}

Specializing further, we see that if $G$ is a crystallographic group 
with perfect holonomy $H$ then $G'$ is a perfect crystallographic group,
and if $H$ is simple then $G'$ also has holonomy $H$.

\begin{lemma}
\label{both}
Let $S$ be a minimal TFNS group and let $H=S/\widetilde{S}$.
Then $S$ has a perfect subgroup of finite index which maps onto $H$.
\end{lemma}

\begin{proof}
By Lemma \ref{perf},
there is an $n\geq0$ such that $S^{(n)}$ is perfect,
and  $S^{(n)}/\widetilde{S^{(n)}}\cong{H}$, 
Hence $h(S^{(n)})=h(S)$, by minimality of $S$,
and so $h(S/S^{(n)}=0$.
Since $S$ is finitely generated,
$S/S^{(n)}$ is finite,
and hence also finitely generated.
\end{proof}

Thus we may assume that a minimal TFNS group is also perfect.

\begin{lemma}
\label{overline}
Let $S$ be a minimal TFNS group which is not virtually abelian.
Then $S$ has normal subgroups $V\leq{U}\leq\widetilde{S}$ such that
$S/U$ is crystallographic,  $U/V$ is finite, $V/I(V)$ has positive rank
and $U/V$ acts effectively on $V/I(V)$.
\end{lemma}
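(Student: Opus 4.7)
The plan is to choose an $S$-invariant subgroup $V$ of $\sqrt{S}$ together with a normal $U$ with $V\leq U\leq\widetilde{S}$, starting from the natural choice $V=\sqrt{S}$.

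First I would set out the essential structure. Since $S$ is finitely generated, torsion free, virtually solvable of finite Hirsch length, it is virtually polycyclic, so $V_0=\sqrt{S}=\sqrt{\widetilde{S}}$ is a finitely generated torsion free nilpotent normal subgroup of $S$ contained in $\widetilde{S}$. A non-trivial finitely generated torsion free nilpotent group has infinite abelianisation, so $V_0/I(V_0)\cong\mathbb{Z}^r$ with $r\geq1$. Moreover $\widetilde{S}/V_0$ is virtually abelian by \cite[15.1.6]{Ro}, so the finite extension $S/V_0$ is also virtually abelian. A Baer-type argument shows that the kernel in $\widetilde{S}$ of the $S$-action on $V_0/I(V_0)$ acts trivially on the subquotients of a central series of $V_0$ and is therefore nilpotent, hence contained in $\sqrt{S}=V_0$; consequently any subgroup $U$ of $\widetilde{S}$ containing $V_0$ has $U/V_0$ acting effectively on $V_0/I(V_0)$.

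The natural first attempt is to set $V=V_0$ and let $U$ be the preimage in $S$ of the maximal finite normal subgroup $F_0$ of $S/V_0$, which exists because $S/V_0$ is finitely generated virtually abelian (finite normal subgroups have bounded order and are closed under products). With this choice, $U\trianglelefteq S$, $U/V=F_0$ is finite, and $\sqrt{U}=V$: indeed $\sqrt{U}$ is characteristic in $U$, so normal and nilpotent in $S$, hence contained in $\sqrt{S}=V$. Lemma \ref{vnil} applied to $U$ then yields an embedding $U/V\hookrightarrow{Aut(\mathbb{Q}V^{ab})}$ induced by conjugation, and since $V/I(V)$ injects into $\mathbb{Q}V^{ab}$, the induced action of $U/V$ on $V/I(V)$ is effective. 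Moreover $S/U$ is virtually abelian and has no non-trivial finite normal subgroup by maximality of $F_0$, so it is crystallographic by Lemma \ref{crys}.

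The point requiring most care is to show $U\leq\widetilde{S}$, equivalently that $F_0$ is solvable. The intersection $F_0\cap(\widetilde{S}/V)$ is automatically solvable (lying in the solvable group $\widetilde{S}/V$), while the quotient $F_0/(F_0\cap\widetilde{S}/V)$ maps injectively as a normal subgroup into $S/\widetilde{S}$, which by definition has no non-trivial solvable normal subgroup; so if this quotient is solvable it must be trivial, giving $U\leq\widetilde{S}$. The main obstacle is thus the case where this quotient is a non-trivial non-solvable normal subgroup of $S/\widetilde{S}$. In that case I would replace $V$ by a proper $S$-invariant subgroup of $V_0$ chosen so that $V_0/V$ still carries a faithful representation of $S/\widetilde{S}$ and $S/V$ itself is crystallographic, after which $U=V$ satisfies the conclusion. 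The existence of such a $V$ relies on the torsion-free hypothesis on $S$, which forces sufficient reducibility of the holonomy representation on $V_0\otimes\mathbb{Q}$ via Bieberbach-type cohomological obstructions; harmonising the two cases into a single uniform argument is where I expect the principal work of the proof to lie.
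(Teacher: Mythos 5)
There are two genuine gaps. The first is your opening claim that $S$ is virtually polycyclic: this is false under the stated hypotheses. The group $\langle a,t\mid tat^{-1}=a^2\rangle\cong\mathbb{Z}[\frac12]\rtimes\mathbb{Z}$ is finitely generated, torsion free, metabelian and of Hirsch length $2$, but is not polycyclic. Everything downstream of that claim is therefore unavailable in the required generality: $\sqrt{S}$ need not be finitely generated, $\sqrt{S}/I(\sqrt{S})$ need not be $\mathbb{Z}^r$, and $S/\sqrt{S}$ need not be virtually abelian --- the paper explicitly warns, just after Lemma \ref{HPnilp}, that \cite[15.1.6]{Ro} is a polycyclic-only fact and fails for general finitely presentable solvable groups \cite{RS}. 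In particular you cannot invoke a maximal finite normal subgroup of $S/\sqrt{S}$, nor conclude that $\sqrt{S/U}$ has finite index. The paper sidesteps all of this by choosing instead a solvable finite-index normal subgroup $T\leq\widetilde{S}$ with infinite abelianization and working with $\widetilde{S}/T'$, which is finitely generated and virtually abelian for elementary reasons; $V$ is then extracted from the derived series of $U$ rather than from $\sqrt{S}$.

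The second gap is the one you yourself flag at the end and do not close: the maximal finite normal subgroup $F_0$ of $S/V_0$ need not be solvable, so your $U$ need not lie in $\widetilde{S}$, and your proposed remedy (``replace $V$ by a proper $S$-invariant subgroup \dots harmonising the two cases is where I expect the principal work to lie'') is a placeholder, not an argument. The paper dissolves this difficulty at the point of definition: it takes the maximal finite normal subgroup of $\widetilde{S}/T'$, not of $S/T'$. That subgroup is automatically solvable (it lies in the solvable group $\widetilde{S}/T'$) and, being characteristic in $\widetilde{S}/T'$, it is normal in $S/T'$; so $U\leq\widetilde{S}$ comes for free. One then verifies directly that $A=\sqrt{S/U}$ is torsion free, abelian and self-centralizing, whence $S/U$ is crystallographic: any finite normal subgroup would meet $A$ trivially, hence centralize $A$, hence lie in $A$, hence be trivial. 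If you adopt that choice of $U$, the case division you were worried about never arises.
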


\begin{proof}
Since $S$ is finitely generated, virtually solvable and infinite 
it has a normal subgroup $T$ of finite index which is solvable 
and has infinite abelianization.
Let $G=S/I(T)$, $A=T/I(T)$ and and $B=C_G(A))$.
Then $A\leq\zeta{B}$ and $[B:A]$ is finite, 
and so $B'$ is finite \cite[10.1.4]{Ro}.
Hence also $I(B)$ is finite, 
and $A$ embeds in $B/I(B)$.
Since $G/B$ acts effectively on $A$, it also acts effectively on $B/I(B)$.

Let $U$ and $W$ be the preimages of $I(B)$ and $B$ in $S$.
Then $U$ and $W$ are normal subgroups of $S$ and $h(U)<h(S)$.
Hence $U$ is solvable,  by minimality of $S$, and so $W$ is also solvable.
Moreover $S/W\cong{G/B}$ acts effectively on $W/U\cong{B/I(B)}$,
so $S/U$ is crystallographic.

Since $U\leq\widetilde{S}$ it is solvable, 
and since $S$ is not virtually abelian $U\not=1$.
Let $U^{(i)}$ be the largest term in the derived series for $U$ such that
$U^{(i)}/I(U^{(i)})$ has positive rank,
and let $V$ be the preimage in $U$ of the centralizer of $U^{(i)}/I(U^{(i)})$ 
in $U/I(U^{(i)})$. 
Then $U^{(i)}\leq{V}\leq{U}$, and so $U/V$ and $V/U^{(i)}$ are torsion groups. 
The group $U^{(i)}/I(U^{(i)})$ is a torsion free abelian group of finite rank,
since $h(S)$ is finite, and $U/V$ acts effectively on $U^{(i)}/I(U^{(i)})$.
Hence $U/V$ is finite \cite[8.1.11]{Ro}.
Let $G=V/I(U^{(i)})$.
Since $U^{(i)}/I(U^{(i)})$ is central in $G$ and $V/U^{(i)}$ is a torsion group,
the commutator subgroup $G'\cong{V'I(U^{(i)})/I(U^{(i)})}$ is also a torsion group.
Hence $U^{(i)}/I(U^{(i)}$ embeds in $V/I(V)$,
and $h(V/I(V))=h(U^{(i)}/I(U^{(i)})>0$.
It also follows easily that $U/V$ acts effectively on $V/I(V)$.
\end{proof}

\begin{cor}
If $S$ is perfect then either $U=V$ or $Aut(V/I(V))$ has a nontrivial finitely 
generated perfect subgroup.
\end{cor}

\begin{proof}
If $S$ is perfect then so is $S/I(V)$, 
and the image of $S/V$ in $Aut(V/(IV))$ contains the image of $U/V$.
\end{proof}

If $S$ is virtually nilpotent we may take $V=U=I(\sqrt{S})$,
by Lemma \ref{vnil},
and if $S$ is virtually polycyclic but not virtually nilpotent
then we may take $V=\sqrt{S}$ (cf. \cite[15.1.6]{Ro}).

\begin{cor}
If $S$ is minimal TFNS then $h(S/U)\geq4$. 
\end{cor}

\begin{proof}
If $S$ is minimal TFNS then $S/U$ is not solvable,
since $U$ is solvable.
Hence $h(S/U)\geq4$, by Lemma \ref{crys}.
\end{proof}

We shall invoke the hypotheses and notation of Lemma \ref{overline} frequently
in \S5 and \S8 below.

\section{the relevant minimal perfect groups}

We are interested in minimal perfect groups $H$ which have non-trivial 
homomorphisms to $GL(n,\mathbb{Z})$, for some $n\leq14$. 
We may in fact work with coefficients $\mathbb{Q}$,
as every finite subgroup of $GL(n,\mathbb{Q})$ is conjugate into 
$GL(n,\mathbb{Z})$. 
Moreover,  the rational group ring $\mathbb{Q}[H]$ is semisimple,
which simplifies our analysis.
The image of $H$ in $GL(n,\mathbb{Q})$ is again a minimal perfect group, 
but may be a proper quotient of $H$.
(When $S$ is virtually nilpotent and $N=\sqrt{S}$,
we may assume that $H=S/N$ embeds in $Aut(\mathbb{Q}N^{ab})$,
by Lemma \ref{vnil}.)

There is a further simplification.
If $h(S)\leq14$ and $h(S/U)>10$ then $h(V)\leq3$,
and we may use Lemma \ref{sl2} below to show that $S$ acts trivially on 
the abelian sections of $V$.
Hence $S$ is virtually nilpotent, and $h(I(\sqrt{S})\leq3$.
We may assume that $S$ is not virtually abelian.
It then follows from Lemma \ref{Sp12} below that $S/\sqrt{S}$ 
preserves an antisymmetric pairing on $\mathbb{Q}\sqrt{S}^{ab}$
which has rank $\leq13$.
Thus the groups of interest to us are either subgroups of $GL(10,\mathbb{Q})$
or subgroups of the symplectic group $Sp(12,\mathbb{Q})$.

It follows easily from the material from \cite{KP} at the end of \S1 
that the only prime powers dividing the orders of finite subgroups 
of $GL(10,\mathbb{Q})$ are 2,3,4,5,7,8,9,11, 13 and 16.
The projective linear groups $PSL(2,q)$ contain cyclic subgroups 
of order $q-1$, $q$ and $q+1$, 
and so we may eliminate such groups for which $q=2^p$ with $p>3$
or $q=3^\ell$ with $\ell>3$.
The Suzuki group $Sz(2^p)$ has a cyclic subgroup of order $2^p+2^s+1$,
where $2s=p+1$, and so we may eliminate such groups with $p>3$.
This leaves only $A_5=PSL(2,5)=PSL(2,4)$, $PSL(2,7)$,  $SL(2,8)=PSL(2,8)$,
$PSL(2,13)$,  $PSL(2, 27)$,  $Sz(8)$ and $PSL(3,3)$.
The last four groups have elements of order 13.

However we still need to consider extensions of such groups by 
solvable normal subgroups.
We shall simplify our task by using the work of Lutowski and Szczep\'anski,
who show that the minimal perfect groups with irreducible embeddings in
$GL(n,\mathbb{Q})$ (for some $n\leq10$) are:
$A_5$, $PSL(2,7)$,  $SL(2,8)$,
the central extensions $SL(2,5)$ and $SL(2,7)$,
and $L_3(2)N2^3$, the non-split extension of $PSL(2,7)$ by $C_2^3$ \cite{LuS}.

If $H$ is simple then all non-trivial representations are faithful.
We shall label the non-trivial $\mathbb{Q}$-irreducible $\mathbb{Q}$-rational 
characters of the groups of most interest to us by their degree. 
We shall also use the same symbols to denote the associated $\mathbb{Q}[H]$-modules.
(See \cite{HP}.)

$A_5$ has three: $\rho_4,\rho_5,\rho_6$.

$PSL(2,7)$ has four: $\tau_{6a},\tau_{6b},\tau_7,\tau_8$.

$SL(2,8)$ also has four: $\psi_7,\psi_8,\psi_{21},\psi_{27}$.\\
The other three groups are not simple, 
but have faithful representations in dimensions 7 or 8.

$SL(2,5)$ has two: $\pi_{8a}$ and $\pi_{8b}$.

$SL(2,7)$ has one: $\xi_8$.
 
$L_3(2)N2^3$ has two: $\lambda_{7a}$ and $\lambda_{7b}$.

We shall also consider faithful representations which are reducible, 
but have no trivial summands.
In particular, $SL(2,5)$ has two such representations in each of dimensions 12 and 13,
given by $\pi_{8i}\oplus\widehat\rho_j$, where $i=a$ or $b$ and $j=4$ or 5,
and $\widehat\rho_j$ is the representation which factors through $\rho_j$.
Similarly, $L_3(2)N2^3$ has four such representations in dimension $13$,
but $SL(2,7)$ has none in dimensions $<14$.

In the sections below on nilpotent groups we shall need to consider also
{\it symplectic\/} representations.
A representation $\rho$ into  $GL(2k,\mathbb{Q})$ is symplectic if it is conjugate
into the subgroup $Sp(2k,\mathbb{Q})$.

\begin{lemma}
\label{symplectic}
Let $\rho:F\to{GL(2k,\mathbb{Q})}$ be an irreducible representation of 
a finite group $F$. 
Then $\rho$ is symplectic if and only if the trivial representation $\mathbf{1}$ is a summand of the exterior square $\wedge_2\rho$.
\end{lemma}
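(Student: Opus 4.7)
The plan is to translate the symplectic condition into the existence of an $F$-invariant nondegenerate skew-symmetric bilinear form on $V=\mathbb{Q}^{2k}$, and then read off this existence from the decomposition of $V\otimes V$. The conjugation of $\rho$ into $Sp(2k,\mathbb{Q})$ is equivalent to the existence of a nondegenerate alternating form $\omega:V\times V\to\mathbb{Q}$ with $\omega(\rho(g)v,\rho(g)w)=\omega(v,w)$ for every $g\in F$; I would state this as the starting point.

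Next I would identify invariant bilinear forms on $V$ with elements of $\mathrm{Hom}_{\mathbb{Q}[F]}(V\otimes V,\mathbb{Q})$, where $\mathbb{Q}$ carries the trivial $F$-action. Since $\mathrm{char}(\mathbb{Q})=0$ we have the natural decomposition $V\otimes V=S^2V\oplus\wedge^2V$ as $\mathbb{Q}[F]$-modules, and the invariant alternating forms correspond exactly to $\mathrm{Hom}_{\mathbb{Q}[F]}(\wedge^2V,\mathbb{Q})$. Because $\mathbb{Q}[F]$ is semisimple, this Hom space is nonzero if and only if $\mathbf{1}$ occurs as a direct summand of $\wedge^2\rho$. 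This already gives the equivalence at the level of (possibly degenerate) invariant alternating forms.

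The remaining point, and the only place where irreducibility is used, is upgrading ``nonzero'' to ``nondegenerate''. A nonzero $F$-invariant bilinear form on $V$ induces a nonzero $\mathbb{Q}[F]$-homomorphism $V\to V^*$; since $V$ is irreducible its kernel is $0$, and since $\dim V=\dim V^*=2k$ this homomorphism is an isomorphism, so the form is nondegenerate. Conversely, a symplectic structure supplies such a nonzero invariant alternating form, hence a copy of $\mathbf{1}$ in $\wedge^2\rho$.

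I do not expect any real obstacle: the whole argument is formal once semisimplicity and Schur's argument for irreducible modules are invoked. The only subtlety worth flagging is that working over $\mathbb{Q}$ (rather than an algebraically closed field) is harmless here, because semisimplicity of $\mathbb{Q}[F]$ and the rank-nullity argument for $V\to V^*$ do not require $\mathrm{End}_{\mathbb{Q}[F]}(V)=\mathbb{Q}$; irreducibility of $V$ as a $\mathbb{Q}[F]$-module is enough.
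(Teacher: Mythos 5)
Your proposal is correct and follows essentially the same route as the paper: an invariant alternating form is a nonzero $F$-map $\wedge^2 V\to\mathbf{1}$, which by semisimplicity of $\mathbb{Q}[F]$ is the same as $\mathbf{1}$ being a summand of $\wedge_2\rho$, and irreducibility forces the radical of the resulting form to vanish (your map $V\to V^*$ having zero kernel is exactly the paper's statement that the radical is $0$). Your closing remark that the argument needs no assumption on $\mathrm{End}_{\mathbb{Q}[F]}(V)$ is a worthwhile clarification, but the substance of the proof is identical.
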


\begin{proof}
If $\rho$ is symplectic then the associated skew-symmetric pairing defines 
a non-zero $F$-linear homomorphism from $\wedge_2\mathbb{Q}^n$ to $\mathbb{Q}$.
Since $\mathbb{Q}[F]$ is semisimple, 
$\mathbf{1}$ is a summand of $\wedge_2\rho$.

Conversely,  a projection from $\wedge_2\rho$ onto $\mathbf{1}$ gives a skew-symmetric pairing on $\wedge_2\mathbb{Q}^n$. 
Since $\rho$ is irreducible the radical of this pairing is 0, 
and so the pairing is non-singular.
Hence $\rho$ is symplectic.
\end{proof}

The finite subgroups of $Sp(2k,\mathbb{Q})$ are determined in \cite{Ki},
for $2k\leq12$. 
In particular, all such groups with order divisible by 13 are solvable,
and the representation of $L_3(2)N2^3$ in $GL(8,\mathbb{Q})$ is not symplectic.

The following information on torsion was provided by R. Lutowski.
Let $G$ be a crystallographic group with holonomy $H$.
If $H\cong{A_5}$ and $h(G)=4$ or 5 then $G$ has 5-torsion.
If $H\cong{PSL(2,7)}$ and $h(G)=6$ or 7,
or if $H\cong{SL(2,8)}$ and $h(G)=7$,
or if  $H\cong{SL(2,7)}$ and $h(G)=8$,
or if $H\cong{L_3(2)N2^3}$ and $h(G)=7$ or 8 then $G$ has 7-torsion.
There are examples with $H\cong{PSL(2,7)}$ or $SL(2,8)$ and $h(G)=8$
with no 7-torsion.  (See also \cite[Chapter 6]{HP}.)

\newpage
\section{perfect virtually solvable subgroups 
of $GL(6,\mathbb{Q})$}

The main result of this section is that if $G$ is a finitely generated, 
perfect, virtually solvable subgroup of $GL(6,\mathbb{Q})$ such that 
$G/\widetilde{G}$ is simple then $G/\widetilde{G}\cong{A_5}$ or $PSL(2,7)$

\begin{lemma}
\label{sl2}
Let $G$ be a finitely generated, virtually solvable,  
perfect subgroup of $GL(d,\mathbb{Q})$. 
If $d\leq3$ then $G=1$,  if $d\leq5$ then $|G/\widetilde{G}|$ divides
$2^{10}3^25$, and if $d=6$ then $|G/\widetilde{G}|$ divides $2^{12}3^45.7$.
\end{lemma}

\begin{proof}
Since $G$ is perfect, it is a subgroup of $SL(d,\mathbb{Q})$.
Let $m$ be the lowest common denominator for 
the entries of a generating set for $G$,
and let $R=\mathbb{Z}[\frac1m]$.
Then $G\leq{SL(d,R)}$.
Let $p>m$ be an odd prime,
and let $G_k$ be the kernel of the projection of $G$ into $SL(d,R/p^kR)$.
Then $G_1$ is torsion free,
$G_k/G_{k+1}$ is an elementary $p$-group for all $k\geq1$,
and $\cap_{k\geq1}G_k=1$.
If $G\not=1$ then $G_1$ is solvable and $G/G_1$ is a perfect subgroup of $SL(d,p)$.
Hence $G/\widetilde{G}$ has order dividing $|SL(d,p)|$.

It is easy to see that $|SL(d,p)|=p^{\binom{d}2}\Pi_{i=2}^d(p^i-1)$
 \cite[3.2.7]{Ro}, 
and so $|SL(d,p)|$ is divisible by all primes $\leq{d+1}$.

If $d=3$ then $|SL(3,p)|=p^3(p-1)^2(p^2+p+1)(p+1)$.
If $p\equiv3~mod~(8)$ then this is divisible by $2^4$ but not by $2^5$,
while if $p\equiv2~mod(9)$ then it is divisible by 3 but not by $3^2$.
If $q$ is a prime $>3$ and $p\equiv2~mod~(q)$ then the factors are congruent 
to $2^6,1,7$ and 3, respectively,  while if $p\equiv3~mod~(q)$ 
the factors are congruent to $3^6,4,13$ and 4, respectively.
Using the infinitude of primes in arithmetic progressions,
we find that the highest common factor of such orders (taken over all $p>m$) is 48.
Hence $G$ has no simple quotient.
In this case $G$ is a solvable perfect group and so $G=1$.

If $d=5$ or 6 similar arguments show that the highest common factors 
of such orders are $2^93^25$ and $2^{12}3^45.7$,
respectively.
\end{proof}

The first assertion is sharp.
The action of $A_5$ on $\mathbb{Z}^5$ by permuting the coordinates fixes
the hyperplane $\sum{x_i}=0$.
Hence $A_5$ acts effectively on $\mathbb{Z}^4$.
The corresponding semidirect product $\mathbb{Z}^4\rtimes{A_5}$ 
is crystallographic of Hirsch length 4,  but is not solvable.

Minimal perfect groups of order dividing $2^93^25$ are extensions of $A_5$ 
by 2-groups \cite{HP}. 
There are many more perfect groups of order dividing $2^{12}3^45.7$,
but $A_5$, $PSL(2,7)$ and $SL(2,8)$ are the only such minimal simple groups.

The next two lemmas shall be used to exclude the possibility that 
$GL(6,\mathbb{Q})$ has such a subgroup $G$ with $G/\widetilde{G}\cong{SL(2,8)}$.

\begin{lemma}
\label{small 2gp}
Let $G$ be an extension of $SL(2,8)$ by a finite solvable group $K$ 
of even order, and let $L$ be the first term of the derived series for $K$ 
such that $|L^{ab}|$ is even.
If $\dim_{\mathbb{F}_2}L/X^2(L)\leq5$ then $G$ is not perfect.
\end{lemma}

\begin{proof}
The group $SL(2,8)$ is superperfect: $H^i(SL(2,8);\mathbb{Z})=0$ for $i=1,2$.
Since $K/L$ has odd order,  $H^i(K/L;\mathbb{F}_2)=0$ for all $i>0$.
An application of the Lyndon-Hochschild-Serre spectral sequence shows that 
 $H^i(G/L;\mathbb{F}_2)=0$ for $i=1,2$.
Since $\Phi_9(X)=X^6+X^3+1$ has no proper factors in $\mathbb{F}_2[X]$,
the group $SL(5,2)$ has no element of order 9.
Since $SL(2,8)$ has such an element and is simple,
there are no nontrivial homomorphisms from $G$ to $SL(5,2)$.
Therefore if $r\leq5$ then $L/X^2(L)$ is central in $G/X^2(L)$,
and so $G/X^2(L)\cong{G/L}\times{L/X^2(L)}$.
Since $|L^{ab}|$ is even, $L/X^2(L)\not=1$, 
and so $G$ is not perfect.
\end{proof}

\begin{lemma}
\label{noC2rxC7}
If $p$ is an odd prime such that $p\equiv3$ or $5~mod~(7)$
then $GL(6,p)$ has no subgroup $P\cong{C_2^r}\rtimes{C_7}$ with $r>1$.
\end{lemma}

\begin{proof}
Suppose that $P<GL(6,P)$.
The subgroup $C_2^r$ is conjugate to a diagonal subgroup $\Delta$.
(For instance, 
we may use the fact that $\mathbb{F}_p[C_2^c]$ is a semisimple ring,
since $p\not=2$, and the simple modules are 1-dimensional.)
The normalizer of $\Delta$ permutes the diagonal entries, 
and so maps to the symmetric group $S_6$, and the kernel of $N_{SL(6,p)}(\Delta)\to{S_6}$ centralizes $\Delta$.
The centralizer $C_\Delta=C_{GL(6,p)}(\Delta)$ is a subgroup of 
$(\mathbb{F}_p^\times)^r\times{GL(6-r,p)}$.
However, if $p\equiv3$ or $5~mod~(7)$ and $r>0$ 
then the order of $GL(6-r,p)$ is prime to 7.
Since $S_6$ has no element of order 7, 
it follows that $SL(6,p)$ has no subgroup of the form
$C_2^r\rtimes{C_7}$ with $r>0$.
\end{proof}

Let $p$ be an odd prime and $H$ a Sylow 2-subgroup for $GL(2,p)$.
Then $H^3$ embeds as a block diagonal subgroup of $GL(6,p)$.
The 2-group ${H^3}\rtimes{C_2}$ obtained by
adjoining the permutation matrix corresponding to $(3,5)(4,6)$
has odd index in $GL(6,p)$, by comparison of orders, and so
is a Sylow 2-subgroup for $GL(6,p)$.
In the next theorem we shall choose $p$ to be $\equiv3~mod~(8)$, 
for convenience,  as $H$ then has order 16.

\begin{theorem}
\label{no sl(2,8)}
If $G$ is a finitely generated, perfect, virtually solvable subgroup of $GL(6,\mathbb{Q})$
such that $G/\widetilde{G}$ is a minimal simplegroup
then $G/\widetilde{G}\cong{A_5}$ or $PSL(2,7)$.
\end{theorem}

\begin{proof}
It is enough to show that $G/\widetilde{G}$ cannot be $SL(2,8)$,
since $|G|$ divides $2^{12}3^45.7$, by Lemma \ref{sl2}.
Assume the contrary.
Then for sufficiently large $p$ the finite group $SL(6,p)$ has a subgroup $J$ 
such that $J/\widetilde{J}\cong{SL(2,8)}$.
Let $P$ be a Sylow 2-subgroup of $J$.
Then $P/P\cap\widetilde{J}$ is a Sylow 2-subgroup of $SL(2,8)$,
and so $P/P\cap\widetilde{J}\cong{C_2^3}$.
The Borel subgroups of $SL(2,8)$ are isomorphic to $C_2^3\rtimes{C_7}$.
The preimage of such a subgroup in $J$ is solvable, 
and so has a $\{2,7\}$-Hall subgroup $H$.
If $p\equiv3$ or 5 $mod~(7^2)$ then $|SL(6,p|$ is not divisible by $7^2$,
and so $H\cong{P}\rtimes{C_7}$.

If $X^2(P)\not=P\cap\widetilde{J}$ then $L/X^2(L)\not=1$ and
a Sylow 2-subgroup of $SL(2,8)$ acts trivially on $L/X^2(L)$,
which is normal in $J/X^2(L)$.
Hence $SL(2,8)$ acts trivially and the extension splits,
by the argument of Lemma \ref{small 2gp}.
Therefore we may assume that $P/X^2(P)\cong{C_2^3}$.
It then follows from the lemma that either $|\widetilde{J}|$ is odd or 
$X^2(P)/X^2X^2(P)\cong{L/X_2(L)}\cong{C_2^r}$ for some $r\geq6$.

Let $p$ be a prime such that $p\equiv3~mod~(8)$. 
Then the 2-Sylow subgroups of $SL(6,p)$ have order $2^{12}$ and 
abelianization $C_2^4$.
It may be verified that if $S$ is such a group then
$X^2(S)\cong{C_4}\times(C_8\oplus{C_4})\rtimes{C_2}$.
Hence every subgroup of $X^2(S)$ can be generated by 4 elements.
Since $P$ is isomorphic to a subgroup of $S$ it follows that 
$X^2(P)$ is isomorphic to a subgroup of $X^2(S)$,
and so $X^2(P)/X^2X^2(P)\cong{C_2^s}$ for some $s\leq4$.
Hence $X^2(P)=1$, by Lemma \ref{small 2gp},
and so $H\cong{C_2^3\rtimes{C_7}}$.

If we now assume also that $p\equiv3~mod~(7)$
(so that $p\equiv3~mod~(56)$) 
then Lemma \ref{noC2rxC7} gives a contradiction.
Hence our assumption was wrong and $GL(6,\mathbb{Q})$ has no such subgroup $G$.
\end{proof}

This result is again sharp, as $SL(2,8)$ embeds in $GL(7,\mathbb{Z})$ \cite{HP}.
It shall be used in Lemmas \ref{mfixed} and \ref{gl(10,Q)}
and Theorem  \ref{vsolvthm} of the next section.
(We shall thereby almost eliminate $SL(2,8)$ from our considerations.)

\section{ $S$ virtually solvable but not virtually nilpotent}

Let $S$ be a minimal TFNS group which is not virtually abelian.
Then $S$ has solvable normal subgroups $V\leq{U}\leq\widetilde{S}$ such that
$S/U$ is crystallographic,  $U/V$ is finite, $V/I(V)$ has positive rank
and $U/V$ acts effectively on $V/I(V)$,
by Lemma \ref{overline}.
If $U$ is nilpotent then we may assume $V=U$, and then
$\sqrt{S/U}$ is the unique such subgroup of minimal index,
but otherwise $S/V$ may have a non-trivial torsion normal subgroup,
and there may several such subgroups.
We shall use commutative algebra to bound $h(V)$ below,
when $S$ is perfect and not virtually nilpotent.

Let $T\leq\widetilde{S}$ be the preimage in $S$ of the translation
subgroup of $S/U$. 
Since $T$ is finitely generated, $U/V$ is finite and $T/U$ is abelian,
$T/V$ is virtually abelian.
Hence $S/V$ has a free abelian normal subgroup $A$ of finite index.
Clearly $A\cong\mathbb{Z}^h$, where $h=h(S/V)=h(S/U)$.
Let $W$ be the preimage of $A$ in $S$.
Then $V<W$ and $W/V\cong{A}$, so $W$ is solvable.
Hence $W\leq\widetilde{S}$ also, and $S/W$ is finite.

The abelianization $V^{ab}$ is a finitely generated $\mathbb{Z}[A]$-torsion module,
and the $\mathbb{Q}[A]$-module $M=\mathbb{Q}\otimes{V}^{ab}$ is 
a finite dimensional $\mathbb{Q}$-vector space.
Hence $\mathbb{Q}[A]/Ann(M)$ is an Artinian ring.
Let $Supp(M)$ be the set of maximal ideals $\mathfrak{m}$ in  $\mathbb{Q}[A]$
which contain $Ann(M)$, and let $M_\mathfrak{m}$ be the
submodule annihilated by a power of $\mathfrak{m}$.
Then $M_\mathfrak{m}/\mathfrak{m}M_\mathfrak{m}$
is a non-trivial vector space over the field 
$\mathbb{K}_\mathfrak{m}=\mathbb{Q}[A]/{\mathfrak{m}}$,
for each $\mathfrak{m}\in{Supp(M)}$,
and  $M=\oplus{M_\mathfrak{m}}$, 
where the summation is over $Supp(M)$.

Conjugation in $S$ induces an action of $S/V$ on $V^{ab}$ and hence on $M$.
This in turn induces an action of $S/W$ as permutations of $Supp(M)$.
Clearly 
\[
|S\mathfrak{m}|\dim_\mathbb{Q}M_\mathfrak{m}\leq\dim_\mathbb{Q}M\leq{h(V)}=h(U),
\quad\mathrm{for~all}~\mathfrak{m}\in{Supp(M)}.
\]
Let $\mathfrak{e}$ be the kernel of the augmentation homomorphism 
$\varepsilon:\mathbb{Q}[A]\to\mathbb{Q}$.
The following lemma is a variation on Hall's criterion \cite[5.2.10]{Ro}.

\begin{lemma}
\label{vHall}
Let $S$ be a finitely generated,  torsion free group with subgroups $U,V,W$ 
as above.
Then $S$ is virtually nilpotent if and only if $U$ is virtually nilpotent 
and $Supp(M)=\{\mathfrak{e}\}$.
\end{lemma}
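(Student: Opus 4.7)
The plan is to view this as a version of P.~Hall's nilpotence criterion \cite[5.2.10]{Ro}, phrased through the augmentation ideal of $\mathbb{Q}[A]$. The observation driving both directions is that, for the finitely generated free abelian group $A$ and the finite dimensional $\mathbb{Q}[A]$-module $M$, the equality $Supp(M)=\{\mathfrak{e}\}$ is equivalent to $\mathfrak{e}^k M=0$ for some $k$, and hence to $A$ acting unipotently on $M$, i.e.\ to the semidirect product $M\rtimes A$ being nilpotent. So the task is to reduce virtual nilpotence of $S$ to that of the finite-index normal subgroup $W$, and then to apply Hall's criterion to the extension $1\to V\to W\to A\to 1$.

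For sufficiency I would assume $U$ is virtually nilpotent and $Supp(M)=\{\mathfrak{e}\}$. Since $U/V$ is finite, $V$ is virtually nilpotent as well, and after replacing $V$ by a characteristic nilpotent subgroup of finite index (normal in $W$) and adjusting $W$ and $A$ accordingly, one may assume $V$ itself is nilpotent. The support hypothesis says that $A$ acts unipotently on $M=\mathbb{Q}\otimes V^{ab}$, so a power of the augmentation ideal of $\mathbb{Q}[A]$ annihilates $M$, which makes the semidirect extension $W/V'$ of $A$ by the abelian group $V/V'$ virtually nilpotent. Applying Hall's criterion to $V\triangleleft W$, with $V$ nilpotent and $W/V'$ virtually nilpotent, gives that $W$ is virtually nilpotent; since $W$ has finite index in $S$, so is $S$.

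For necessity I would start with a nilpotent normal subgroup $W_0\triangleleft W$ of finite index, which exists because $W$ has finite index in the virtually nilpotent group $S$. Then $U\leq W$ is automatically virtually nilpotent. For any element $a$ in the image $A_0=W_0V/V$ of $W_0$ in $A$, iterated commutation with $V\cap W_0$ terminates in $W_0$ by nilpotence, so $a$ acts unipotently on the torsion-free abelian quotient $(V\cap W_0)/(V\cap W_0)'$, and therefore on $M$. The finite-index subgroup $A_0$ of $A$ thus acts unipotently, and replacing $A$ by $A_0$ (which is allowable since the setup only requires $A$ to be some free abelian normal subgroup of finite index in $S/V$) yields $Supp(M)=\{\mathfrak{e}\}$. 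The delicate point, where torsion-freeness of $S$ and the specific form of $V$ from Lemma~\ref{overline} are both essential, is to carry out this finite-index adjustment of $A$ coherently with the direct-sum decomposition $M=\bigoplus_{\mathfrak{m}} M_{\mathfrak{m}}$ and to ensure that no residual roots-of-unity eigenvalues survive the passage.
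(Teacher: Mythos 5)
Your overall strategy --- translating $Supp(M)=\{\mathfrak{e}\}$ into unipotence of the $A$-action on $M$ and then invoking Hall's nilpotency criterion --- is the same as the paper's, but two of your reductions have genuine gaps. In the sufficiency direction you replace $V$ by a nilpotent characteristic subgroup $V_0$ of finite index ``and adjust $W$ and $A$ accordingly''; this silently changes $M$. Since $\mathbb{Q}\otimes V^{ab}$ is only the module of $V/V_0$-coinvariants of $\mathbb{Q}\otimes V_0^{ab}$, the latter may be strictly larger, and the hypothesis $Supp(M)=\{\mathfrak{e}\}$ says nothing about how $A$ acts on the complementary summand, which is exactly where non-unipotent behaviour could hide. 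Likewise your claim that unipotence of $A$ on $M$ makes $W/V'$ virtually nilpotent ignores the torsion of $V^{ab}$, which is killed by $\otimes\,\mathbb{Q}$ and need not even be finite. The paper avoids both problems by not changing $V$: it pulls the $\mathfrak{e}$-adic filtration of $M$ back to $V/I(V)$ (which is torsion free by construction, hence embeds in $M$), obtaining an ascending series $I(V)=C_0\leq\dots\leq C_n=V$ with each $C_{i+1}/C_i$ central in $W/C_i$, so that $W/I(V)$ is nilpotent outright; only afterwards does it combine this with the virtual nilpotence of $V$ and the torsion-freeness of $W$ via a variation of Hall's criterion.

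In the necessity direction you obtain unipotence only for a finite-index subgroup $A_0\leq A$ and then propose to ``replace $A$ by $A_0$''. That is not an allowable move: $Supp(M)$ is defined with respect to $\mathbb{Q}[A]$ for the $A$ fixed in the setup of \S5 (essentially canonical when $V=U$ is nilpotent), and unipotence of $A_0$ only forces the eigenvalues of elements of $A$ on $M$ to be roots of unity, not to equal $1$; a Klein-bottle-type configuration shows the distinction is real. You flag exactly this as ``the delicate point'' but do not resolve it, so this direction remains unproved. (For what it is worth, the paper's own argument has the same weak spot: the subquotients $Z_{i+1}\cap(V/I(V))/Z_i\cap(V/I(V))$ are a priori annihilated only by the augmentation ideal of the image of the finite-index nilpotent subgroup, not by $\mathfrak{e}$ itself, and ruling out nontrivial roots of unity requires using the specific construction of $U$ and $V$ in Lemma \ref{overline}, which neither you nor the paper carries out explicitly.)
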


\begin{proof}
If $S$ is virtually nilpotent then $W/I(V)$ is also virtually nilpotent.
Let $Z_i$ be an ascending central series for a nilpotent subgroup of finite index
in $W/I(V)$.
The  intersections $Z_i\cap(V/I(V))$ give rise to a filtration of $M$ 
with subquotients annihilated by $\mathfrak{e}$.

Conversely, if $Supp(M)=\{\mathfrak{e}\}$ then $M$ has such a filtration.
This determines an ascending series $\{V_i\mid0\leq{i}\leq{n}\}$ for 
$V$ such that $V_0=I(V)$,  
each subquotient $V_{i+1}/V_i$ is central in $W/V_i$ and $V_n=V$. 
Hence $W/I(V)$ is nilpotent.
Since $W$ is torsion free,
it follows that if $U$ is virtually nilpotent then
$W$ is virtually nilpotent, by a mild variation of Hall's criterion \cite[5.2.10]{Ro}.
\end{proof}

We shall assume for the rest of this section that $S$ is finitely generated,
perfect and TFNS,
and that $H=S/\widetilde{S}$ is a minimal simple group.
The symbols $A, M,U,V$ and $W$ retain the above roles.
The image of $S/V$ in $Aut(M)$ is finitely generated, 
perfect and virtually solvable.

\begin{lemma}
\label{mfixed}
Suppose that $\mathfrak{m}$ is fixed by $S$, and let 
$d=[\mathbb{K}_\mathfrak{m}:\mathbb{Q}]$.
If either $H=A_5$ and $d<4$ or $H=PSL(2,7)$ and $d<6$ 
or $H=SL(2,8)$ and $d<7$
then $\mathfrak{m}=\mathfrak{e}$.
\end{lemma}

\begin{proof}
If $\mathfrak{m}$ is fixed by $S$ then $S$ acts on $\mathbb{K}_\mathfrak{m}$.
The action is $\mathbb{Q}$-linear, and so factors through $GL(d,\mathbb{Q})$.
If $d\leq3$ this group has no non-trivial finitely generated perfect subgroups,
by Lemma \ref{sl2},
and so the action is trivial.
Since $S/A$ is perfect, 
$\mathbb{Z}\otimes_{\mathbb{Z}[S/A]}A=0$, 
and so the image of $A$ in $\mathbb{K}_\mathfrak{m}^\times$ is trivial.
Hence $\mathfrak{m}=\mathfrak{e}$.
This settles the case $H\cong{A_5}$. 

Since $PSL(2,7)$ has order a multiple of 7 we may apply the second part of
Lemma \ref{sl2}, and for $SL(2,8)$ we may apply Theorem \ref{no sl(2,8)}.
\end{proof}

The bounds on $d$ here are best possible,
since $A_5, PSL(2,7)$ and $SL(2,8)$ have embeddings in $GL(d,\mathbb{Q})$ with $d=4,6$ and 7, 
respectively.

\begin{lemma}
\label{gl(10,Q)}
Let $d=4$ if $H\cong{A_5}$, 
let $d=6$ if $H\cong{PSL(2,7)}$ and let $d=7$ if $H\cong{SL(2,8)}$.
If $h(U)\leq{d}$ then either $U$ is virtually abelian or $S$ is virtually nilpotent.
\end{lemma}

\begin{proof}
Let $Z=\zeta\sqrt{U}$.
Since $U$ is nontrivial,
$1\leq{h(Z)}\leq{h(U)}\leq{d}$.
If $h(Z)=h(U)$ then $U/Z$ is a torsion group which 
acts effectively on $Z$.
Hence $U/Z$ is finite, 
by a result of Schur \cite[8.1.11]{Ro},
and so $U$ is virtually abelian.

If $h(Z)<d$ then the action of $S$ on $Z$ by conjugation 
must be trivial,  by Lemma \ref{sl2} and the fact that $S$ is perfect.
Hence $Z$ is central in $S$ and so $Z=\zeta{U}$.
Since $\sqrt{U}$ is torsion free each factor of the upper central series
for $\sqrt{U}$ is torsion free, 
by a result of Mal'tsev \cite[5.2.19]{Ro}.
Therefore if $h(Z)<h(U)\leq{d}$ we may repeat the argument,
since $h(\zeta(\sqrt{U/Z})\leq{h(U/Z)}<d$ and 
$\sqrt{U/Z}$ is torsion free,
and we find that $\zeta\sqrt{U/Z}$ is central in $S/Z$.
Let $\zeta_i{U}$ be the $i$th term of the upper central series for $U$
\cite[page 121]{Ro}.
Iterating the argument, we find that $\zeta_eU=\sqrt{U}$ for some $e<d$ 
and $U/\sqrt{U}$ is a torsion group. 
Since $U$ is torsion free and acts trivially on the sections of its ascending
central series it is nilpotent,
by Baer's extension of Schur's Lemma \cite[14.5.1]{Ro},
and so $U=\sqrt{U}=\zeta_eU$.
Since $\zeta_{i+1}U/\zeta_iU$ is central in $S/\zeta_iU$ for all $i<e$
and $S/U$ is virtually abelian,  $S$ is virtually nilpotent.
\end{proof}

The argument shows that if $h(\zeta\sqrt{U})<d$ and $h(U/\zeta\sqrt{U})<d$ 
then $S$ is virtually nilpotent.
In particular,  this is so if $h(U)\leq3$.

\begin{lemma}
\label{mnotfixed}
Let $\mathfrak{m}\in{Supp(M)}$, and suppose that $\mathfrak{m}$ is not fixed by $S$.
Then 
\begin{enumerate}
\item{}if $S/\widetilde{S}\cong{A_5}$ then $h(S)\geq9$,
and if $h(S)\leq15$ then\\ $\dim_\mathbb{Q}M_\mathfrak{m}\leq2$;
\item{}if $S/\widetilde{S}\cong{PSL(2,7)}$ then $h(S)\geq13$,
and if $h(S)\leq15$ then\\ $\dim_\mathbb{Q}M_\mathfrak{m}=1$;
\item{}if $S/\widetilde{S}\cong{SL(2,8)}$ then $h(S)\geq16$.
\end{enumerate}
\end{lemma}

\begin{proof}
Since $S_4$ has no perfect subgroups,
each non-trivial orbit $S\mathfrak{m}$ of the action of $S$ on $Supp(M)$ 
has at least 5 members.

If $S/\widetilde{S}\cong{A_5}$ then $h(S/U)\geq4$ and so $h(S)\geq9$.
If $h(S)\leq15$ then $|S\mathfrak{m}|\dim_\mathbb{Q}M_\mathfrak{m}\leq11$
and so $\dim_\mathbb{Q}M_\mathfrak{m}\leq2$.

If $S/\widetilde{S}\cong{PSL(2,7)}$ then $h(S/U)\geq6$.
There is no nontrivial homomorphism from $S$ to the symmetric group $S_6$,
since $S_6$ has no subgroup with order a multiple of 7.
Hence if $S\mathfrak{m}$ is non-trivial it must have order $\geq7$.
Hence $h(S)\geq{h(S/U)}+|S\mathfrak{m}|\geq13$.
If $h(S)\leq15$ then $|S\mathfrak{m}|\dim_\mathbb{Q}M_\mathfrak{m}\leq9$
and so $\dim_\mathbb{Q}M_\mathfrak{m}=1$.

If $S/\widetilde{S}\cong{SL(2,8)}$ then $h(S/U)\geq7$.
The symmetric group $S_8$ has no subgroup which is an extension of $SL(2,8)$.
Hence if $S\mathfrak{m}$ is non-trivial it must have order $\geq9$,
and so $h(S)\geq{h(S/U)}+|S\mathfrak{m}|\geq16$.
\end{proof}

Together these lemmas give the following bounds.

\begin{theorem}
\label{vsolvthm}
Let $S$ be a finitely generated, perfect group which is TFNS but not virtually nilpotent.
If $H=S/\widetilde{S}\cong{A_5}$ then $h(S)\geq9$,
if $H\cong{PSL(2,7)}$
then $h(S)\geq13$ and if $SL(2,8)$ then $h(S)\geq14$.
\end{theorem}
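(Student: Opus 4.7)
My plan is to apply Lemma~\ref{overline} to obtain subgroups $V\leq U\leq\widetilde{S}$ with $h(S)=h(U)+h(S/U)$ and then bound each summand using the representation theory of \S4 together with the module $M=\mathbb{Q}\otimes V^{ab}$. The crystallographic quotient $S/U$ has perfect holonomy with simple quotient $H$; by the classification of minimal perfect groups and their smallest faithful $\mathbb{Q}$-representations in \S4, we get $h(S/U)\geq 4$, $6$, $7$ for $H=A_5$, $PSL(2,7)$, $SL(2,8)$ respectively, with these minima realised by the simple groups themselves acting via $\rho_4$, $\tau_{6a}$, $\psi_7$.

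To bound $h(U)\geq\dim_\mathbb{Q}M$, I split according to the $S$-action on $Supp(M)$. If some $\mathfrak{m}\in Supp(M)$ is not fixed by $S$, Lemma~\ref{mnotfixed} directly yields $h(S)\geq 9$; moreover, the minimal non-trivial permutation degrees of $PSL(2,7)$ and $SL(2,8)$---respectively $7$ and $9$---together with the inequality $|S\mathfrak{m}|\dim_\mathbb{Q}M_\mathfrak{m}\leq h(U)$ force $h(U)\geq 7$ and $h(U)\geq 9$, giving $h(S)\geq 13$ and $h(S)\geq 16$ in those two subcases. If every $\mathfrak{m}$ is fixed but some $\mathfrak{m}\neq\mathfrak{e}$, then by Lemma~\ref{mfixed} we have $d=[\mathbb{K}_\mathfrak{m}:\mathbb{Q}]\geq 4$; since $\mathfrak{m}\neq\mathfrak{e}$, the image of $A$ (hence of $S$) in $Aut(M_\mathfrak{m})$ is non-trivial and perfect, so by Lemma~\ref{sl2} it must have $H$ as its simple quotient. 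Analysing whether the action of $S$ on $\mathbb{K}_\mathfrak{m}$ by ring automorphisms is trivial (automatic for $d\leq 4$, since $S_d$ has no non-trivial perfect subgroup then) yields either $\dim_\mathbb{Q}M_\mathfrak{m}\geq 2d\geq 8$ (via $n=\dim_{\mathbb{K}_\mathfrak{m}}M_\mathfrak{m}\geq 2$, as $GL(1,\mathbb{K}_\mathfrak{m})$ is abelian) or $\dim_\mathbb{Q}M_\mathfrak{m}\geq d\geq 5$; in either event $h(U)\geq 5$, giving $h(S)\geq 9$, with the stronger bounds for $PSL(2,7)$ and $SL(2,8)$ following from their larger minimal $\mathbb{Q}$-representation dimensions.

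The remaining case is $Supp(M)=\{\mathfrak{e}\}$: here Lemma~\ref{vHall} together with the non-nilpotency of $S$ forces $U$ itself to be non-virtually-nilpotent. I would descend through the derived series of $U$---let $U^{(j+1)}$ be the largest term with $U^{(j+1)}/I(U^{(j+1)})$ of positive rank and re-run the module analysis with $\mathbb{Q}\otimes(U^{(j)})^{ab}$ in place of $M$---producing a non-trivial $S$-module that returns the argument to one of the preceding cases at a lower level of the derived structure. The main obstacle is precisely this descent: the naive decomposition $h(S)=h(U)+h(S/U)$ becomes non-sharp when $S$ acts trivially on $V^{ab}$, and one must locate the non-trivial $S$-action deep inside the derived structure of $U$ while preserving both the representation-theoretic constraints from $H$ and the torsion-freeness of $S$ through the recursion.
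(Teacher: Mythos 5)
Your skeleton (Lemma \ref{overline}, the module $M=\mathbb{Q}\otimes V^{ab}$, and the trichotomy moved orbit / fixed $\mathfrak{m}\neq\mathfrak{e}$ / $Supp(M)=\{\mathfrak{e}\}$) is the paper's, and your treatment of a fixed $\mathfrak{m}\neq\mathfrak{e}$ is a genuinely different and rather clean alternative for $H\cong A_5$: the paper instead pins down $h(U)=\dim_\mathbb{Q}M=4$, deduces that $V$ is abelian of rank $4$, and uses torsion-freeness to show an order-$5$ element must centralize $V$, forcing virtual nilpotency; your semilinearity argument ($Aut(\mathbb{K}_\mathfrak{m}/\mathbb{Q})$ is solvable, so the perfect group $S$ acts $\mathbb{K}_\mathfrak{m}$-linearly, and $n=1$ would make the image abelian while containing the non-trivial scalars coming from $A$) reaches $h(U)\geq5$ without invoking torsion-freeness. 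However, there are two genuine gaps. First, your ``main obstacle'' case $Supp(M)=\{\mathfrak{e}\}$ with $U$ not virtually nilpotent does not occur, and your proposed descent through the derived series is not needed (nor is it a proof as sketched): that descent is already built into the construction of $V$ in Lemma \ref{overline}. There $U^{(i)}$ is the \emph{last} term of the derived series with positive rank, so $U^{(i+1)}$ has Hirsch length $0$ and, being a subgroup of the torsion-free group $S$, is trivial; the proof of Lemma \ref{overline} then shows $V'$ is a torsion group, hence trivial, so $V$ is abelian and $U$ is virtually abelian. Lemma \ref{vHall} therefore applies directly and gives $\mathfrak{m}\neq\mathfrak{e}$ in $Supp(M)$ whenever $S$ is not virtually nilpotent, which is exactly how the paper opens the proof.

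Second, the bound for $H\cong SL(2,8)$ cannot be extracted from ``larger minimal $\mathbb{Q}$-representation dimensions.'' You need $h(U)\geq7$, i.e.\ you must rule out a non-trivial image of $S$ in $GL(6,\mathbb{Q})$. The image is not $SL(2,8)$ itself but some finitely generated perfect virtually solvable group with $SL(2,8)$ as its unique simple quotient, and Lemma \ref{sl2} only excludes such images for $d\leq5$ (indeed $7\nmid 2^{10}3^25$), not for $d=6$. The paper has to work for this: it reduces modulo a large prime $p$, invokes the Aschbacher--Dynkin theorem to see that the finite image in $GL(6,p)$ is a central extension of $SL(2,8)$, uses superperfectness to split that extension, and then lifts to characteristic $0$ to contradict the fact that the minimal non-trivial complex representation of $SL(2,8)$ has degree $7$. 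Without some such argument your $h(S)\geq14$ claim for $SL(2,8)$ is unsupported. (For $PSL(2,7)$ your route is fine: Lemma \ref{sl2} excludes images in $GL(5,\mathbb{Q})$ with quotient $PSL(2,7)$ since $7\nmid 2^{10}3^25$, and the index of proper subgroups of perfect extensions of $PSL(2,7)$ handles the moved-orbit case.)
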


\begin{proof}
Suppose that $H\cong{A_5}$ and $h(S)\leq8$.
Then $h(U)\leq4$, since $h(S/U)\geq4$.
Hence $U$ is virtually nilpotent, by Lemma \ref{gl(10,Q)},
and $S$ acts trivially on $Supp(M)$, by Lemma \ref{mnotfixed}.
Since $S$ is not virtually nilpotent $Supp(M)$ has a member
$\mathfrak{m}\not=\mathfrak{e}$, by Lemma \ref{vHall},
and $[\mathbb{K}_\mathfrak{m}:\mathbb{Q}]\geq4$, 
by Lemma \ref{mfixed}.
Then $\dim_\mathbb{Q}M=h(U)=h(S/U)=4$, and so $V$ is abelian.
The crystallographic group $S/U$ has an element $s$ of order 5.
The normal closure of $s$ in $S/U$ is the whole of $S/U$, 
since $H$ is simple and $S/U$ is perfect.
Since $U/V$ is finite,  
there is an element $s'\in{S}$ with image $s$ in $S/U$, 
and whose image in $S/V$ has order a power of 5.
Since $S$ is torsion free and $V$ has rank 4 it follows that $s'$ must centralize $V$.
But then the normal closure of $s'$ in $S$ centralizes $V$ also, 
and so $S$ is virtually nilpotent, contrary to hypothesis.

If $H\cong{PSL(2,7)}$ then $h(S/U)\geq6$,
and if $h(S/U)=6$ or 7 then $S/U$ has 7-torsion \cite{Lu23}.
An argument similar to that for $A_5$ shows that if $h(U)\leq6$ 
then $S$ must be virtually nilpotent.

If $H\cong{SL(2,8)}$ then $h(S/U)\geq7$.
Since $S/V$ acts effectively on $V/I(V)$, by Lemma \ref{overline},
$V/I(V)$ has rank $\geq7$, by Theorem \ref{no sl(2,8)}.
Hence $h(U)=h(V)\geq6$ and so $h(S)\geq13$.
\end{proof}

There is an alternative argument for polycyclic groups, 
which sharpens the result for $H\cong{SL(2,8)}$.

\begin{theorem}
\label{vpcnonvnil}
Let $S$ be a virtually polycyclic group which is perfect and TFNS,
but not virtually nilpotent, and let $U$ be a normal subgroup which contains $\sqrt {S}$ 
and such that $S/U$ is crystallographic.
Then $h(S)\geq2h(S/U)+1$, 
with equality only if $\sqrt{S}$ is abelian and $[U:\sqrt{S}]$ is finite.
In particular,
if $H\cong{SL(2,8)}$ then $h(S)\geq15$.
\end{theorem}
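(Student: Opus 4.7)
The plan is to argue by contradiction: assume $h(S)\leq 12$ when $H\cong PSL(2,7)$, respectively $h(S)\leq 14$ when $H\cong SL(2,8)$, and derive a contradiction. Set $n=6$ in the first case, $n=7$ in the second. Theorem \ref{vsolvthm} then forces $h(S/U)=h(U)=n$. Since $S$ is virtually polycyclic and not virtually nilpotent, the (unused) polycyclic lemma stated immediately after Lemma \ref{vHall} produces some $\mathfrak{m}_0\in Supp(M)$ with $[\mathbb{K}_{\mathfrak{m}_0}:\mathbb{Q}]\geq 2$. Lemma \ref{mnotfixed} then forces $\mathfrak{m}_0$ to be fixed by $S$: a nontrivial orbit would force $\mathbb{K}_{\mathfrak{m}_0}=\mathbb{Q}$ in the $PSL(2,7)$ case, while in the $SL(2,8)$ case part (iii) of that lemma already tells us no $\mathfrak{m}$ has a nontrivial orbit when $h(S)\leq 14$. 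Repeating the minimal-degree argument from the proof of Theorem \ref{vsolvthm} -- the $\mathbb{Q}$-linear action of $S$ on $\mathbb{K}_{\mathfrak{m}_0}$ has perfect image admitting $H$ as simple quotient, which by Lemma \ref{sl2} (and, for $SL(2,8)$, by the reduction-mod-$p$ argument of that proof) cannot occur inside $GL(d,\mathbb{Q})$ for $d<n$ -- yields $d_0:=[\mathbb{K}_{\mathfrak{m}_0}:\mathbb{Q}]\geq n$.

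Next I run the chain $d_0\leq\dim_\mathbb{Q} M_{\mathfrak{m}_0}\leq\dim_\mathbb{Q} M=h(V^{ab})\leq h(V)=h(U)=n$ and observe it must be an equality throughout, so $V=\sqrt{S}$ is torsion free abelian of rank $n$, $V\cong\mathbb{Z}^n$, and $M=M_{\mathfrak{m}_0}\cong\mathbb{K}_{\mathfrak{m}_0}$ as $\mathbb{Q}[A]$-module. Let $W$ denote the preimage in $S$ of $A\leq S/U$; then $W\leq\widetilde{S}$ and $h(W)=h(S)$. Applying Lemma \ref{HPnilp} to $\widetilde{S}$ yields $C_{\widetilde{S}}(V)=V$, so $W/V$ embeds faithfully in $GL(n,\mathbb{Z})$. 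Since $S/\sqrt{S}$ is virtually abelian by \cite[15.1.6]{Ro} and virtually polycyclic groups are virtually torsion free, $W/V$ contains a torsion free finite-index subgroup $G$; this $G$ meets the finite $U/V$ trivially and hence embeds as a finite-index subgroup of $A\cong\mathbb{Z}^n$, yielding $G\cong\mathbb{Z}^n$.

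The contradiction then comes from Dirichlet's unit theorem. Under the isomorphism $V\otimes\mathbb{Q}\cong\mathbb{K}_{\mathfrak{m}_0}$ of $\mathbb{Q}[A]$-modules, the subgroup $G$ acts on $V\otimes\mathbb{Q}$ by multiplication via its image in $\mathbb{K}_{\mathfrak{m}_0}^\times$; preservation of the lattice $V\subset\mathbb{K}_{\mathfrak{m}_0}$ places this image in the unit group of a $\mathbb{Z}$-order of $\mathbb{K}_{\mathfrak{m}_0}$, whose rank is at most $d_0-1=n-1$ by Dirichlet. This is incompatible with the faithful embedding $G\cong\mathbb{Z}^n\hookrightarrow GL(n,\mathbb{Z})$, and so we must have $h(S)\geq 13$ when $H\cong PSL(2,7)$ and $h(S)\geq 15$ when $H\cong SL(2,8)$. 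The main obstacle will be to verify rigorously that the abstract $\mathbb{Q}[A]$-module structure on $V^{ab}$ genuinely realises the $G$-action on $V$ as multiplication in $\mathbb{K}_{\mathfrak{m}_0}^\times$; the torsion-freeness of $G$ together with $G\cap(U/V)=1$ should provide a canonical identification of $G$ with a finite-index subgroup of $A$ inside $W/V$, under which the two actions coincide.
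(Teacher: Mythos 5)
Your argument is correct in substance, but it takes a very different and far longer route than the paper. The paper's entire proof is two lines: after arranging that $U/\sqrt{S}$ is finite, it invokes the consequence of Wilson's theorem recorded just before Lemma \ref{Wilson}, namely $h(\sqrt{G})>h(G/\sqrt{G})$ for virtually polycyclic $G$, so that $h(S)>2h(S/\sqrt{S})\geq2h(S/U)\geq12$ (resp.\ $14$). You instead rule out the extremal case $h(S)=2n$ by hand: the \S5 machinery pins down $\sqrt{S}\cong\mathbb{Z}^n$ with $\mathbb{Q}\otimes\sqrt{S}\cong\mathbb{K}_{\mathfrak{m}_0}$ a degree-$n$ field, the faithfulness $C_{\widetilde{S}}(\sqrt{S})=\sqrt{S}$ embeds $A\cong\mathbb{Z}^n$ into the unit group of an order of $\mathbb{K}_{\mathfrak{m}_0}$, and Dirichlet kills it. This is essentially a self-contained proof of the instance of Wilson's inequality that the paper cites (the paper's own example $\mathcal{O}_K\rtimes\mathcal{O}_K^\times$ showing sharpness of that inequality is exactly your number-theoretic mechanism), so what you lose in brevity you gain in not needing \cite{Wi}. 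Two small points. First, the worry in your last sentence is unfounded: the $\mathbb{Q}[A]$-module structure on $M$ \emph{is} by definition the conjugation action of $W/V=A$, and $W/V\cong A$ is already free abelian, so no auxiliary torsion-free subgroup $G$ is needed; faithfulness on $V$ gives injectivity of $A\to\mathbb{K}_{\mathfrak{m}_0}^\times$ directly. Second, the minimal-degree argument most cleanly bounds $\dim_\mathbb{Q}M_{\mathfrak{m}_0}\geq n$ (the action of $S$ on the $S$-invariant summand $M_{\mathfrak{m}_0}$ has perfect image with quotient $H$, else $A$ would annihilate $M_{\mathfrak{m}_0}$ and force $\mathfrak{m}_0=\mathfrak{e}$); you then recover $[\mathbb{K}_{\mathfrak{m}_0}:\mathbb{Q}]=n$ because this degree divides $\dim_\mathbb{Q}M_{\mathfrak{m}_0}=n$ and is $\geq4$ by Lemma \ref{mfixed}. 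Stated as a direct bound on $[\mathbb{K}_{\mathfrak{m}_0}:\mathbb{Q}]$ it matches what the paper itself asserts in Theorem \ref{vsolvthm}, so this is a presentational refinement rather than a gap.
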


\begin{proof}
We may assume that $U$ contains $\sqrt{S}$ as a subgroup of finite index,
since $S/\sqrt{S}$ is virtually abelian.
Then $h(S/U)=h(S/\sqrt{S})<h(U)=h(\sqrt{S}^{ab})$, by Lemma \ref{Wilson}. 
Since $\sqrt{S}$ is torsion free,
$h(\sqrt{S})=h(\sqrt{S}^{ab}$ if and only if $\sqrt{S}$ is abelian.
Thus the first assertion is clear.

If $H\cong{SL(2,8)}$ then $h(S/U)\geq7$,  and so $h(S)\geq15$.
\end{proof}

\section{nilpotent groups}

We shall say that  a finitely generated nilpotent group $N$
 is {\it of type $[m,n]$} if 
$N/\tilde\gamma_2N\cong\mathbb{Z}^m$ and 
$\tilde\gamma_2N/\tilde\gamma_3N\cong\mathbb{Z}^n$.
(Note that $n\leq\binom{m}2$, since $N/\tilde\gamma_3N$
is a quotient of $F(m)$,  the free group of rank $m$.)

Let $\mathbb{Q}N_{i/i+1}=\mathbb{Q}\otimes\tilde\gamma_iN/\tilde\gamma_{i+1}N$,
for $i\geq1$, 
and let $\mathbb{Q}N^{ab}=\mathbb{Q}N_{1/2}$.
Automorphisms of $N$ must preserve the rational {\it commutator pairing}
\[
[-,-]_\mathbb{Q}:\mathbb{Q}N^{ab}\wedge\mathbb{Q}N^{ab}\to
\mathbb{Q}N_{2/3}.
\]
This pairing has two related aspects.
It is a skew-symmetric pairing on $\mathbb{Q}N^{ab}$, 
and also is an epimorphism of $\mathbb{Q}$-vector spaces.
In the latter context we shall use the term {\it commutator epimorphism\/}.
Let 
\[
R(N)=
\{x\in\mathbb{Q}N^{ab}\mid[x,y]_\mathbb{Q}=0,~\forall~y\in\mathbb{Q}N^{ab} \}
\]
be the radical of the commutator pairing,
and let $\overline{\mathbb{Q}N^{ab}}=\mathbb{Q}N^{ab}/R(N)$.

\begin{lemma}
\label{rad dim}
Let $N$ be a finitely generated nilpotent group of type $[m,n]$ with $n>0$.
Suppose that a finite perfect group $H$ acts effectively on $N$ 
and fixes no nontrivial subspace of $\mathbb{Q}N^{ab}$.
Then
\begin{enumerate}
\item$\dim_\mathbb{Q}\overline{\mathbb{Q}N^{ab}}\ge4$ and either $R(N)=0$ 
or $\dim_\mathbb{Q}R(N)\geq4$;
\item$\mathbb{Q}N_{2/3}$ is a direct summand of
$\mathbb{Q}N^{ab}\wedge\mathbb{Q}N^{ab}$.
\end{enumerate}
\end{lemma}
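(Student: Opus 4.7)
The plan is to deduce both parts from the semisimplicity of $\mathbb{Q}[H]$ (Maschke's theorem, since $H$ is finite) together with Lemma \ref{sl2}. Because $H$ acts on $N$ by automorphisms, the graded pieces $\mathbb{Q}N^{ab}$ and $\mathbb{Q}N_{2/3}$ are $\mathbb{Q}[H]$-modules and the commutator pairing $[-,-]_{\mathbb{Q}}$ is $H$-equivariant; these structural facts will do most of the work.

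For part (1), I first verify that $R(N)$ is an $H$-invariant subspace of $\mathbb{Q}N^{ab}$: if $h\in H$ and $x\in R(N)$, then $[hx,y]_{\mathbb{Q}}=h[x,h^{-1}y]_{\mathbb{Q}}=0$ for every $y$. By Maschke there is a $\mathbb{Q}[H]$-decomposition $\mathbb{Q}N^{ab}=R(N)\oplus W$ with $W\cong\overline{\mathbb{Q}N^{ab}}$, and the hypothesis that $H$ fixes no nontrivial subspace passes to both summands. Now I invoke Lemma \ref{sl2}: a finite perfect subgroup of $GL(d,\mathbb{Q})$ with $d\leq 3$ is trivial. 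Hence if either $R(N)$ or $\overline{\mathbb{Q}N^{ab}}$ had positive dimension at most $3$, the image of $H$ in its general linear group would be trivial, making that summand a trivial $\mathbb{Q}[H]$-module; but the no-fixed-vector condition would then force it to vanish, contradicting positive dimension. This rules out dimensions $1,2,3$ for each. Finally $\overline{\mathbb{Q}N^{ab}}\neq 0$, since the commutator pairing surjects onto $\mathbb{Q}N_{2/3}\neq 0$ (using $n>0$), so $\dim\overline{\mathbb{Q}N^{ab}}\geq 4$, while $R(N)$ is either $0$ or of dimension $\geq 4$. I note in passing that $H\neq 1$ is automatic from $n>0$: otherwise the fixed-subspace condition would force $\mathbb{Q}N^{ab}=0$, hence $m=0$, and a finitely generated nilpotent group with torsion abelianization is finite, giving $n=0$.

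Part (2) is then immediate from semisimplicity: the commutator epimorphism $\mathbb{Q}N^{ab}\wedge\mathbb{Q}N^{ab}\twoheadrightarrow\mathbb{Q}N_{2/3}$ is a surjection of $\mathbb{Q}[H]$-modules, so by Maschke it admits an $H$-equivariant section, exhibiting $\mathbb{Q}N_{2/3}$ as a direct summand. I do not expect a serious obstacle here: the lemma is essentially a representation-theoretic consequence of Lemma \ref{sl2} together with Maschke's theorem. The only mildly delicate bookkeeping is the disjunctive conclusion "$R(N)=0$ or $\dim R(N)\geq 4$" rather than a single inequality, since $R(N)$ is genuinely allowed to vanish (when the commutator pairing is already nondegenerate on $\mathbb{Q}N^{ab}$).
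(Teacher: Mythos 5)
Your proof is correct and follows essentially the same route as the paper: Maschke's theorem gives the $H$-invariant complement of $R(N)$ and the splitting of the commutator epimorphism, and Lemma \ref{sl2} rules out dimensions $1$, $2$, $3$ for the nonzero summands. The extra details you supply (the explicit check that $R(N)$ is $H$-invariant, and deducing $\overline{\mathbb{Q}N^{ab}}\neq 0$ from $n>0$ via the surjection onto $\mathbb{Q}N_{2/3}$) are just elaborations of the paper's argument.
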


\begin{proof}
The radical $R(N)$ is an $H$-invariant subspace of $\mathbb{Q}N^{ab}$.
Since $\mathbb{Q}[H]$ is a semisimple ring,
$R(N)$ has an $H$-invariant complement in $\mathbb{Q}N^{ab}$, 
which projects isomorphically onto $\overline{\mathbb{Q}N^{ab}}$.
The complement is non-zero, since $N$ is not virtually abelian.

Since finite perfect subgroups of $GL(3,\mathbb{Q})$ are trivial,
by Lemma \ref{sl2},
any $H$-invariant subspace of $\mathbb{Q}N^{ab}$ of dimension $\leq3$
is fixed pointwise.
Hence $\dim_\mathbb{Q}\overline{\mathbb{Q}N^{ab}}\ge4$
and either $R(N)=0$ or $\dim_\mathbb{Q}R(N)\geq4$.

The second assertion is clear, since $\mathbb{Q}[H]$ is semisimple and 
$[-,-]_\mathbb{Q}$ is an epimorphism.
\end{proof}

Assume the hypotheses of the lemma.
The kernel of the commutator epimorphism has dimension 
$\binom{m}2-n$ and is $H$-invariant. 
Hence if $N$ has type $[4,n]$ with $n=3,4$ or 5 then this kernel 
has dimension $3,2$ or 1,  and so $H$ acts trivially on it.
Let $\omega$ be a nonzero 2-form in this kernel.
Since 2-forms determine skew-symmetric pairings on the dual vector space,
we  may choose a basis $\{e_1,e_2,e_3,e_4\}$ for $\mathbb{Q}N^{ab}$, 
so that $\omega$ is one of $e_1\wedge{e_2}$ or 
$e_1\wedge{e_2}+e_3\wedge{e_4}$.
If an automorphism of $N$ fixes $e_1\wedge{e_2}$ then it fixes 
the subspace of $\mathbb{Q}{N^{ab}}$ generated by $e_1$ and $e_2$.
Hence we may assume that $\omega=e_1\wedge{e_2}+e_3\wedge{e_4}$,
and that $H$ does not fix any 2-dimensional subspace of
$\mathbb{Q}N^{ab}$.
In this case the skew-symmetric pairing determined by $\omega$
is non-degenerate and $H$ acts symplectically.

We shall need to know how the exterior squares of faithful representations 
of degree $\leq10$ decompose as a sum of irreducible representations.
This is an easy exercise in comparing characters.  
See \cite[Chapter 2]{Se}.

\medskip
\noindent$A_5$:\quad $\wedge_2\rho_4\cong\rho_6$,
\quad$\wedge_2\rho_5\cong\rho_4\oplus\rho_6$,
\quad$\wedge_2\rho_6\cong\rho_4\oplus\rho_5\oplus\rho_6$,

$\wedge_22\rho_4\cong\mathbf{1}\oplus\rho_4\oplus\rho_5\oplus3\rho_6$,\quad
$\wedge_22\rho_5\cong\mathbf{1}\oplus4\rho_4\oplus2\rho_5\oplus3\rho_6$,

$\wedge_22\rho_6\cong2\mathbf{1}\oplus4\rho_4\oplus6\rho_5\oplus3\rho_6$,\quad
$\wedge_2(\rho_4\oplus\rho_5)\cong5\rho_4\oplus\rho_5\oplus3\rho_6$, 

{and}\quad$\wedge_2(\rho_4\oplus\rho_6)\cong3(\rho_4\oplus\rho_5\oplus\rho_6)$.

\noindent$PSL(2,7)$:\quad
$\wedge_2\tau_{6a}\cong2\mathbf{1}\oplus\tau_{6a}\oplus\tau_7$,\quad
$\wedge_2\tau_{6b}\cong2\mathbf{1}\oplus\tau_{6b}\oplus\tau_7$,

$\wedge_2\tau_7\cong6\tau_{6a}\oplus\tau_7\oplus\tau_8$\quad
{and}\quad$\wedge_2\tau_8\cong6\tau_{6a}\oplus2\tau_7\oplus\tau_8$

\noindent$SL(2,5)$:\quad
$\wedge_2\pi_{8a}\cong3\mathbf{1}\oplus\widehat\rho_4\oplus3\widehat\rho_5\oplus\widehat\rho_6$,\quad{and}\quad
$\wedge_2\pi_{8b}\cong6\mathbf{1}\oplus4\widehat\rho_4\oplus\widehat\rho_6$.

\noindent$SL(2,7)$:\quad
$\wedge_2\xi_8\cong\mathbf{1}\oplus\widehat\tau_{6b}\oplus
2\widehat\tau_7\oplus\widehat\tau_8$.

\noindent$SL(2,8)$:\quad
$\wedge_2\psi_7\cong\psi_{21}$\quad{and}\quad
$\wedge_2\psi_8\cong\psi_7\oplus\psi_{21}$.

\noindent$L_3(2)N2^3$:\quad
$\wedge_2\lambda_{7a}\cong\lambda_{7a}\oplus\lambda_{14}$\quad{and}
\quad $\wedge_2\lambda_{7b}\cong\lambda_{7b}\oplus\lambda_{14}$.

For the representations of degrees $m=11$ to 13 we need only know
that the following exterior squares each have trivial summands of rank $n$, 
for all $n\leq14-m$.

\noindent$A_5$:\quad 
$\wedge_2(3\rho_4)$ and $\wedge_2(2\rho_4\oplus\rho_5)$.

\noindent$PSL(2,7)$:\quad $\wedge_2(\tau_{6i}\oplus\tau_{6j})$ and 
$\wedge_2(\tau_{6i}\oplus\tau_7)$ 

\noindent$SL(2,5)$:\quad$\wedge_2(\pi_{8i}\oplus\widehat\rho_4)$ and
$\wedge_2(\pi_{8i}\oplus\widehat\rho_5)$ 

On the other hand, $\wedge_2(\rho_5\oplus\rho_6)$ has  no trivial summands.

\section{$S$ virtually nilpotent}

In this section we shall assume that $S$ is virtually nilpotent,
but not virtually abelian, and that $S$ is perfect and $h(S)\leq14$.
However we do not assume here that $H=S/\sqrt{S}$ is simple,
and we do not need the notation of Lemma \ref{overline},
as we may take $V=U=I(\sqrt{S})$, by Lemma \ref{vnil}.
Our goal is to limit the possibilities for $H$ and for
the type of $N=\sqrt{S}$.
The fact that $S$ is torsion free is used only through Lemma \ref{vnil},
to show that $H$ embeds in $Aut(\mathbb{Q}N^{ab})$.

We shall play off the $\mathbb{Q}[H]$-module structures of $\mathbb{Q}N^{ab}$
and $\mathbb{Q}N_{2/3}=\mathbb{Q}\otimes\tilde\gamma_2N/\tilde\gamma_3N$ 
against each other.
The key conditions are
\begin{enumerate}
\item$H$ embeds in $Aut(\mathbb{Q}N^{ab})$;
\item$\dim_\mathbb{Q}N^{ab}+\dim_\mathbb{Q}\mathbb{Q}N_{2/3}\leq{h(S)}\leq14$ 
and  $\dim_\mathbb{Q}N_{2/3}\geq1$;
\item$\mathbb{Q}N_{2/3}$ is a $\mathbb{Q}[H]$-summand of $\wedge_2\mathbb{Q}N^{ab}$;
\item{}$\mathbb{Q}$ is not a $\mathbb{Q}[H]$-summand of $\mathbb{Q}N^{ab}$,
by Lemmas \ref{HS?} and \ref{vnil};
\item{}if $\mathbb{Q}$ is a $\mathbb{Q}[H]$-summand of $\mathbb{Q}N_{2/3}$ then
$\mathbb{Q}N^{ab}$ has a symplectic summand, by Lemma \ref{symplectic}.
\end{enumerate}

Let $[m,n]$ be the type of $N$.
Then $m+n\leq{h(S)}\leq14$ and $n>0$, by (2).
If $ \mathbb{Q}$ is a $\mathbb{Q}[H]$-summand of $\mathbb{Q}N_{2/3}$ 
then $\mathbb{Q}N^{ab}\cong{V_0}\oplus{V_1}$,
where $V_1$ supports a nonsingular skew-symmetric pairing,
with even rank $r$. 
Since finite subgroups of $Sp(4,\mathbb{Q})$ are solvable, $r\geq6$,
and since $H$ acts effectively on $V_0$,
either $V_0=0$ or $m-r=\dim_\mathbb{Q}V_0\geq4$.
In the latter case $m\geq10$.
There is always such a summand $\mathbb{Q}$ if $n\leq3$.

On the other hand,
the fact that $A_5$ acts effectively on $F(4)/\gamma_3(F(4))$ 
(see \S8 below) shows that 
$\mathbb{Q}N^{ab}$ need not have a symplectic summand.

\begin{lemma}
\label{Sp12}
If $H=S/\sqrt{S}$ is not a subgroup of $GL(10,\mathbb{Q})$ then 
it is a subgroup of $Sp(12,\mathbb{Q})$.
\end{lemma}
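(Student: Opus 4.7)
The plan is to combine the dimension constraints $m+n\leq 14$, $n\geq 1$ with conditions (3)--(5) from the start of \S7 and the structure of the commutator decomposition. First I observe that the faithful embedding $H\hookrightarrow \mathrm{Aut}(\mathbb{Q}N^{ab})=GL(m,\mathbb{Q})$ has $m\geq 11$ by hypothesis, while $m+n\leq h(S)\leq 14$ and $n\geq 1$ force $m\in\{11,12,13\}$ and $n\leq 3$. Since $H$ is perfect and $\dim\mathbb{Q}N_{2/3}=n\leq 3$, Lemma \ref{sl2} implies that $H$ acts trivially on $\mathbb{Q}N_{2/3}$, so $\mathbf{1}$ is a $\mathbb{Q}[H]$-summand of $\mathbb{Q}N_{2/3}$ and, by (3), of $\wedge_2\mathbb{Q}N^{ab}$. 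The discussion preceding the lemma then furnishes a decomposition $\mathbb{Q}N^{ab}=V_0\oplus V_1$ in which $V_1$ is symplectic of even rank $r\geq 6$ and either $V_0=0$ or $\dim V_0\geq 4$.

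Next I would locate an element $g\in H$ of order $13$. The computations of $e_n(p)$ in the first paragraph of \S2 give $e_n(13)=0$ for $n\leq 11$, and the enumeration in \S4 shows that a minimal simple group admitting a non-trivial $\mathbb{Q}$-representation of degree $\leq 14$ either embeds in $GL(10,\mathbb{Q})$ or is one of $PSL(2,13)$, $PSL(2,27)$, $Sz(8)$, $PSL(3,3)$, each of which has order divisible by $13$. Since $H$ is perfect and fails to embed in $GL(10,\mathbb{Q})$, at least one such group appears as a simple composition factor of $H$, whence $13\mid |H|$ and Cauchy's theorem supplies $g$.

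Now $g$ acts non-trivially on the faithful module $\mathbb{Q}N^{ab}$ with order $13$, so $\Phi_{13}(x)$, of degree $12$, divides its minimal polynomial. The $\Phi_{13}$-primary subspace $P$ of $\mathbb{Q}N^{ab}$ therefore has positive dimension divisible by $12$. Because $V_0$ and $V_1$ are $g$-invariant, $P=(P\cap V_0)\oplus(P\cap V_1)$ with each summand having dimension a multiple of $12$; as $\dim P\leq m\leq 13$, exactly one of these summands has dimension $12$ and the other is zero. If $P\subseteq V_0$ then $\dim V_0\geq 12$ leaves $\dim V_1\leq 1$, contradicting $r\geq 6$; hence $P\subseteq V_1$ and $\dim V_1\geq 12$. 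Since $\dim V_1$ is even and at most $m\leq 13$, we get $\dim V_1=12$, $\dim V_0\leq 1$, and the dichotomy above forces $V_0=0$. Thus $\mathbb{Q}N^{ab}=V_1$ is a $12$-dimensional symplectic $\mathbb{Q}[H]$-module and $H\leq Sp(12,\mathbb{Q})$.

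The main obstacle is the middle step, extracting an element of order $13$ from the hypothesis $H\not\leq GL(10,\mathbb{Q})$; this rests on the catalogue of minimal perfect groups compiled in \S4 rather than on any purely representation-theoretic argument applied directly to $H$.
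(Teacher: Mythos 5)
Your opening reductions ($11\leq m\leq13$, $n\leq3$, triviality of the $H$-action on $\mathbb{Q}N_{2/3}$, the decomposition $\mathbb{Q}N^{ab}=V_0\oplus V_1$ with $V_1$ nonsingular of even rank $\geq6$) agree with the paper, and your final paragraph would indeed force $V_0=0$ and $\dim V_1=12$ \emph{if} $H$ contained an element of order $13$. The gap is the middle step. From ``$H$ is perfect and does not embed in $GL(10,\mathbb{Q})$'' you cannot conclude that some composition factor of $H$ fails to embed in $GL(10,\mathbb{Q})$, still less that it is one of $PSL(2,13)$, $PSL(2,27)$, $Sz(8)$ or $PSL(3,3)$. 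The composition factors of a group may all embed in far smaller degree than the group itself: $SL(2,5)$ has composition factors $A_5$ and $\mathbb{Z}/2\mathbb{Z}$, which embed in $GL(4,\mathbb{Q})$ and $GL(1,\mathbb{Q})$, yet $SL(2,5)$ needs degree $8$. In \S7 the group $H=S/\sqrt{S}$ is perfect but not assumed simple, so a priori it could be a perfect extension of, say, $A_5$ by a nontrivial solvable normal subgroup whose minimal faithful rational representation has degree $11$, $12$ or $13$; the catalogue in \S4 concerns minimal \emph{simple} groups and (via \cite{LuS}) perfect groups with irreducible embeddings in $GL(n,\mathbb{Q})$ for $n\leq10$, and rules out no such $H$. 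Indeed the whole point of Lemma~\ref{Sp12} is to reduce these unknown $H$ to classified families, so your appeal to the catalogue is circular and cannot supply the element of order $13$.

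For comparison, the paper's proof needs no such element. It sets $\omega=\lambda\circ[-,-]_\mathbb{Q}$ for an epimorphism $\lambda:\mathbb{Q}N_{2/3}\to\mathbb{Q}$ and writes $\mathbb{Q}N^{ab}=R(\omega)\oplus V$ with the pairing nonsingular on $V$, of even dimension $d$. If $R(\omega)=0$ then $m=d=12$ and we are done. Otherwise $r=\dim R(\omega)\geq4$ and $d=6$ or $8$, and one examines the kernels $K_R$, $K_V$ of the projections $\pi_R:H\to GL(r,\mathbb{Q})$ and $\pi_V:H\to Sp(d,\mathbb{Q})$: these lie in the solvable radical $\widetilde{H}$ and intersect trivially, and Kirschmer's classification \cite{Ki} of finite perfect subgroups of $Sp(6,\mathbb{Q})$ and $Sp(8,\mathbb{Q})$ forces one of $\pi_R$, $\pi_V$ to be injective. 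Either way $H$ embeds in $GL(k,\mathbb{Q})$ for some $k<10$, contradicting the hypothesis; hence $R(\omega)=0$ after all. If you want to salvage your argument you must replace the order-$13$ step by an analysis of this kind.
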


\begin{proof}
Since $h(S)\leq14$ and $S$ is not virtually abelian, 
$11\leq{m}\leq13$.
Hence $n\leq3$ and so  $\mathbb{Q}N_{2/3}$ is a trivial $H$-module.
Let $\lambda:\mathbb{Q}N_{2/3}\to\mathbb{Q}$ be an epimorphism.
Then $H$ preserves the non-zero skew-symmetric pairing 
$\omega=\lambda\circ[-,-]_\mathbb{Q}$,
and $\mathbb{Q}N^{ab}\cong{R(\omega)\oplus{V}}$,
where the induced pairing on $V$ is non-singular,
and so $d=\dim_\mathbb{Q}V$ is even.
Let $\pi_R$ and $\pi_V$ be the projections of $H$ into $GL(r,\mathbb{Q})$ and 
$Sp(d,\mathbb{Q})$, respectively.

If $R(\omega)=0$ then $m=12$ and $H$ is a subgroup of $Sp(12,\mathbb{Q})$,
since $\mathbb{Q}N^{ab}$ is a faithful $H$-module.

If $R(\omega)\not=0$ then $r=\dim_\mathbb{Q}R(\omega)\geq4$ and $d=4$,  
6 or 8,  since $\mathbb{Q}N^{ab}$ has no trivial summands.
Let $\pi_R$ and $\pi_V$ be the projections of $H$ into $GL(r,\mathbb{Q})$ and 
$Sp(d,\mathbb{Q})$, respectively.
Then $K_R=\mathrm{Ker}(\pi_R)$ and $K_V=\mathrm{Ker}(\pi_V)$ 
are each subgroups of $\widetilde{H}$,  
and $K_R\cap{K_V}=1$,
since $\mathbb{Q}N^{ab}$ is a faithful $H$-module.
Since $d\leq8$ and $\pi_V(H)$ is perfect,
$\pi_V(H)$ is either $A_5$, $PSL(2,7)$ or $SL(2,5)$ \cite{Ki}.
In the first two cases $K_V=\widetilde{H}$,
and so $K_R\leq{K_V}$.
Hence $\pi_R$ is injective and $H<GL(r,\mathbb{Z})$.
Otherwise,  $d=8$, so $r=4$ and $\pi_R(H)\cong{A_5}$.
The image $\pi_V(K_R)$ is a solvable normal subgroup of $\pi_V(H)\cong{SL(2,5)}$.
Since $\pi_V$ maps $K_R$ injectively, $K_R=C_2$.
Hence $H\cong{SL(2,5)}$ and $\pi_V$ is injective.
Since $r<10$ and $SL(2,5)<GL(8,\mathbb{Z})$, this proves the lemma.
\end{proof}

The only (minimal perfect) groups with representations satisfying conditions (1)--(5) 
above are $H\cong{A_5}$,  $PSL(2,7)$,  $SL(2,5)$, $SL(2,7)$ or $L_3(2)N2^3$.
Consideration of the decompositions of $\mathbb{Q}N^{ab}$ and 
$\mathbb{Q}N_{2/3}$ as $\mathbb{Q}[H]$-modules with condition (3) 
shows that the possibilities for $[m,n]$ and $H$ are:

$m=4$ and $n=6$. Only $H=A_5$.

$m=5$ and $n=4$ or 6. Only $A_5$.

$m=6$ and $n=1$ or 2. Only $PSL(2,7)$.

$m=6$ and $n=4$ or 5. Only  $A_5$.

$m=6$ and $n=6$.  $A_5$ or $PSL(2,7)$.

$m=6$ and $n=7$ or 8.  Only $PSL(2,7)$.

$m=7$ and $n=6$.  Only $PSL(2,7)$.

$m=7$ and $n=7$.  $PSL(2,7)$ or $L_3(2)N2^3$.

$m=8$ and $n=1$.  $A_5$, $SL(2,5)$ or $SL(2,7)$.

$m=8$ and $n=2$ or 3.  Only $SL(2,5)$.

$m=8$ and $n=4$ or 5.  $A_5$ or $SL(2,5)$.

$m=8$ and $n=6$. $A_5, PSL(2,7)$ or $SL(2,5)$.
 
$m=9$ and $n=4$ or 5.  Only $A_5$.

$m=10$ and $n=1$. Only $A_5$ and $\mathbb{Q}N^{ab}\cong\rho_5\oplus\rho_5$.

$m=10$ and $n=4$. Only $A_5$.

$m=12$ and $n=1$ or 2.  $A_5, PSL(2,7)$ or $SL(2,5)$.

$m=13$ and $n=1$.  $A_5$, $PSL(2,7)$ or $SL(2,5)$.

Parallel arguments apply further down the $\mathbb{Q}$-lower central series,
since conjugation in $S$ induces actions of $H=S/\sqrt{S}$ on each of the subquotients
$\tilde\gamma_iN/\tilde\gamma_{i+1}N$, and 
there are natural epimorphisms from 
$\mathbb{Q}N^{ab}\otimes\mathbb{Q}N_{i/i+1}$ to $\mathbb{Q}N_{i+1/i+2}$,
for all $i\geq1$ \cite[5.2.5]{Ro}. 

\begin{theorem}
\label{vmetab}
Let $S$ be a finitely generated perfect group which is virtually nilpotent and TFNS.
If $h(S)\leq14$ then either $\tilde\gamma_4\sqrt{S}=1$,
or $S/\sqrt{S}\cong{A_5}$,  $\sqrt{S}$ has type $[8,1]$, 
$h(\tilde\gamma_3\sqrt{S})=5$
and $\tilde\gamma_4\sqrt{S}\cong\mathbb{Z}$.
In all cases,  $\sqrt{S}$ is metabelian.
\end{theorem}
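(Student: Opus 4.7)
The strategy is to extend the dimension-counting arguments of \S7 one step further down the rational lower central series of $N=\sqrt{S}$, in order to constrain $\mathbb{Q}N_{3/4}=\mathbb{Q}\otimes\tilde\gamma_3N/\tilde\gamma_4N$ and thereby force $\tilde\gamma_4N=1$ in all but one exceptional case. The essential tool, valid for every $i\geq1$, is the $H$-equivariant epimorphism
\[
\mathbb{Q}N^{ab}\otimes\mathbb{Q}N_{i/i+1}\twoheadrightarrow\mathbb{Q}N_{i+1/i+2}
\]
induced by the group commutator (cf.\ \cite[5.2.5]{Ro}). Since $\mathbb{Q}[H]$ is semisimple, each $\mathbb{Q}N_{i+1/i+2}$ is a summand of the source, and the total Hirsch bound $m+n+\sum_{i\geq2}\dim_\mathbb{Q}\mathbb{Q}N_{i/i+1}\leq h(S)\leq14$ sharply limits the possibilities for each graded piece.

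Assuming $\tilde\gamma_4N\neq1$, I would run down the $(m,n,H)$ list from \S7 with $m+n\leq13$. For each triple one decomposes $\mathbb{Q}N^{ab}\otimes\mathbb{Q}N_{2/3}$ into irreducible $\mathbb{Q}[H]$-summands (using character tables of $A_5$ and $PSL(2,7)$, paralleling the exterior-square tables tabulated in \S6) and checks whether any non-zero summand has dimension at most $14-m-n$. Most cases die immediately because the smallest irreducible summand of the tensor product already overshoots the budget, or because a non-zero $\mathbb{Q}N_{3/4}$ would force a further non-zero $\mathbb{Q}N_{4/5}$ which pushes the total Hirsch length beyond $14$. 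The borderline cases, chiefly the small-$n$ triples for $PSL(2,7)$ and $SL(2,5)$, are killed by passing once more down the series and invoking the constraints from Lemmas \ref{HS?} and \ref{symplectic} together with the Jacobi identity on the associated graded Lie algebra. The only survivor is $H\cong A_5$ with $\mathbb{Q}N^{ab}\cong 2\rho_4$ and $\mathbb{Q}N_{2/3}\cong\mathbf{1}$, so $N$ has type $[8,1]$; then $\mathbb{Q}N^{ab}\otimes\mathbb{Q}N_{2/3}\cong 2\rho_4$, the only $\mathbb{Q}[A_5]$-summand of dimension at most $5$ is $\rho_4$, and iterating once more into $2\rho_4\otimes\rho_4$ leaves room for at most one trivial summand in $\mathbb{Q}N_{4/5}$. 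A Hirsch tally then forces the class to be exactly $4$, $\tilde\gamma_4N\cong\mathbb{Z}$ (rank one, torsion-free) and $h(\tilde\gamma_3N)=5$.

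The metabelian conclusion is then automatic in the class-$3$ case, since $[N',N']\subseteq\gamma_4N=1$. In the exceptional class-$4$ case the one-dimensionality of $\mathbb{Q}N_{2/3}$ implies $\mathbb{Q}N_{2/3}\wedge\mathbb{Q}N_{2/3}=0$, so the induced commutator $[\tilde\gamma_2N,\tilde\gamma_2N]\to\mathbb{Q}\otimes\tilde\gamma_4N$ vanishes; combined with $\tilde\gamma_4N\cong\mathbb{Z}$ being torsion-free, this forces $[N',N']=1$. The main obstacle will be the case analysis in the middle paragraph: several numerically admissible configurations for $PSL(2,7)$ and $SL(2,5)$ require careful unpacking of the Jacobi identity on the graded Lie algebra to rule out, and the bookkeeping across all the $(m,n)$ branches is the principal source of labour.
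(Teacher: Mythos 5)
Your proposal follows essentially the same route as the paper: the $H$-equivariant epimorphism $\mathbb{Q}N^{ab}\otimes\mathbb{Q}N_{i/i+1}\twoheadrightarrow\mathbb{Q}N_{i+1/i+2}$ plus the Hirsch-length budget isolates $H\cong A_5$ with type $[8,1]$, $\mathbb{Q}N_{3/4}=\rho_4$ and $\mathbb{Q}N_{4/5}=\mathbf{1}$ as the only class-$4$ survivor, and your wedge-square argument for $[N',N']=1$ in that case (using that $\tilde\gamma_2N/\tilde\gamma_3N$ is cyclic and $\tilde\gamma_4N$ is torsion free) is exactly the paper's ``closer inspection''. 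The only cosmetic difference is that the paper's case check is pure dimension counting against the \S7 list and does not need the Jacobi identity.
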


\begin{proof}
Let $H=S/\sqrt{S}$, and suppose that $\sqrt{S}$ has type $[m,n]$.
Then $\mathbb{Q}\sqrt{S}_{3/4}$ is a summand of 
$\mathbb{Q}\sqrt{S}^{ab}\otimes\mathbb{Q}\sqrt{S}_{2/3}$, 
of rank $\leq14-m-n$.
Checking the possibilities, we see that either $H\cong{A_5}$ and $[m,n]=[4,6]$,
[5,4], [5,5],  [6,4],  [6,6], [8,1], [8,4] or [9,4],
or $H\cong{PSL(2,7)}$ and $[m,n]=[6,1]$, [6,2] or [6,6].

If $\tilde\gamma_4\sqrt{S}=1$ then $\sqrt{S}$ is metabelian,
since  $G''\leq\gamma_4G$ for any group $G$.
If $\tilde\gamma_4\sqrt{S}\not=1$ then 
$\mathbb{Q}\sqrt{S}_{4/5}$ is a summand of
$\mathbb{Q}\sqrt{S}^{ab}\otimes\mathbb{Q}\sqrt{S}_{3/4}$.
Since $h(S)\leq14$ we must have $H\cong{A_5}$, $[m,n]=[8,1]$,  
${\mathbb{Q}\sqrt{S}_{3/4}=\rho_4}$ and $\mathbb{Q}\sqrt{S}_{4/5}=\mathbf{1}$.
Hence $h(\tilde\gamma_3\sqrt{S})=5$ and $\tilde\gamma_4\sqrt{S}\cong\mathbb{Z}$.
Closer inspection shows that since 
$\tilde\gamma_2\sqrt{S}/\tilde\gamma_3\sqrt{S}$ is cyclic,
$I(\sqrt{S})'\leq\tilde\gamma_5\sqrt{S}=1$, and so $\sqrt{S}$ is again metabelian.
\end{proof}


\section{torsion in the crystallographic quotients}

In this section we shall use the fact that the crystallographic quotients of our groups 
often have ``large" finite subgroups to reduce the list of unsettled cases 
with Hirsch length $h<15$ further.

We shall assume henceforth that $S$ is finitely generated,  perfect and TFNS.
Let $\langle\langle{g}\rangle\rangle$ be the normal closure of $g$ in $S$,
and let $S_1=S/\langle\langle{g}\rangle\rangle$.
If $g\notin\widetilde{S}$ then $S_1/\widetilde{S_1}$ is a proper quotient 
of $S/\widetilde{S}$, and so $S_1=\widetilde{S_1}$.
Hence $S_1$ is a perfect solvable group.
Thus $S_1=1$ and so any such element $g$ normally generates $S$.

\begin{lemma}
\label{I(N)<zeta}
Suppose that $G$ has normal subgroups $K\leq{J}$ such that $J/K$ is finite
and $K$ is torsion free abelian of rank $n$.
If $G$ has an element $g$ which normally generates $G/J$ and
whose image in $G/J$ has prime order $p>n$ then $K\leq\zeta{G}$.
\end{lemma}

\begin{proof}
Let $g\in{G}$ be an element whose image in $G/J$ has order $p$,
and which normally generates $G/J$.
Let $[J:K]=p^dq$ where $(p,q)=1$ and $d\geq0$.
After replacing $g$ by $g^q$, if necessary, 
we may assume that the image of $g$ in $G/K$ has order $p^e$ for some $e\geq1$.
Since $p>n$ and the subgroup generated by $K$ and $g$ is torsion free, 
$g$ centralizes $K$.
Since $K$ is normal in $G$,
the normal closure of $g$ in $G/J$ centralizes $K$, 
and since $K$ is an abelian normal subgroup it follows that $J\leq\zeta{G}$.
\end{proof}

We may now exclude some of the cases with $S$ virtually nilpotent of type $[m,n]$, 
$I(\sqrt{S})\cong\mathbb{Z}^n$ and $H=S/\sqrt{S}$ simple.
If $p=5$, $H=A_5$ and $[m,n]=[5,4]$, [8,4] or [9,4],
or if $p=7$, $H\cong{PSL(2,7)}$ and $[m,n]=[6,6]$ or [7,6]
then $S/I(\sqrt{S})$ has $p$-torsion \cite[Chapter 6]{HP} and $n<p$.
Since $I(\sqrt{S})$ is a non-trivial $H$-module,  by Lemma \ref{rad dim},
it follows from Lemma \ref{I(N)<zeta} that $S$ cannot be torsion free.

Let $H$ be a finite group which acts effectively on an abelian group $A$
and let $G$ be an extension of $H$ by $A$ corresponding to $\xi\in{H^2(H;A)}$.
If $K<H$ has order relatively prime to that of $H^2(H;A)$ 
then the restriction to $K$ of $c(\xi)$ is 0,
and so the restricted extension splits.

\begin{lemma}
\label{split normal}
Let $G$ be a crystallographic group with translation subgroup $A$ and holonomy $H$.
Suppose that the Sylow $p$-subgroup of $H$ is a cyclic subgroup $C$ which 
acts without fixed points on $A$.
Then $G$ has a subgroup isomorphic to $N_H(C)$.
\end{lemma}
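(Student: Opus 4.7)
The plan is to let $G_N \subset G$ be the preimage of $N = N_H(C)$ under the projection $G \to H$, giving a group extension $1 \to A \to G_N \to N \to 1$ classified by an element $c_N \in H^2(N; A)$. If this class vanishes, any section $N \to G_N$ provides a subgroup of $G_N \subset G$ isomorphic to $N$. I will therefore show $H^2(N; A) = 0$ outright, using the Lyndon--Hochschild--Serre spectral sequence for the extension $1 \to C \to N \to N/C \to 1$ (noting that $C$ is normal in $N$ by the definition of the normalizer).

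The spectral sequence has $E_2^{p,q} = H^p(N/C; H^q(C; A))$ converging to $H^{p+q}(N; A)$, so I would compute the three contributions $E_2^{2,0}$, $E_2^{1,1}$, $E_2^{0,2}$. Since $C$ is cyclic and $A^C = 0$, the standard periodic computation gives $H^0(C; A) = A^C = 0$ and $H^2(C; A) = A^C / N_C A = 0$, where $N_C = 1 + g + \dots + g^{|C|-1}$ for a generator $g$ of $C$. This immediately kills both $E_2^{2,0} = H^2(N/C; 0)$ and $E_2^{0,2} = H^0(N/C; 0)$.

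The remaining term $E_2^{1,1} = H^1(N/C; H^1(C; A))$ vanishes by a coprimality argument. On one hand, $H^1(C; A)$ is annihilated by $|C|$ and is therefore a finite $p$-group, so the cochain complex computing $H^1(N/C; H^1(C; A))$ has $p$-power torsion coefficients and the cohomology is $p$-power torsion. On the other hand, because $C$ is a Sylow $p$-subgroup of $H$ and is contained in $N$, it is also a Sylow $p$-subgroup of $N$, so $[N:C]$ is coprime to $p$ and $H^1(N/C; -)$ is annihilated by $|N/C|$. Being annihilated by two coprime integers, $E_2^{1,1}$ must be zero. Hence $H^2(N; A) = 0$, the extension $G_N$ splits, and $G$ contains a subgroup isomorphic to $N_H(C)$.

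I expect no serious obstacle here; once the spectral sequence is set up, the argument is essentially formal. The ``no fixed points'' hypothesis is precisely what is needed to force the cohomology of the cyclic group $C$ to vanish in the relevant even degrees, and the Sylow condition is precisely what delivers the coprimality needed to annihilate the mixed term $E_2^{1,1}$.
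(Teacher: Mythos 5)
Your argument is correct and is essentially the paper's own proof: both use the Lyndon--Hochschild--Serre spectral sequence for $1\to C\to N_H(C)\to N_H(C)/C\to1$, killing $E_2^{2,0}$ and $E_2^{0,2}$ via the periodicity of cyclic cohomology together with $A^C=0$, and killing $E_2^{1,1}$ by the coprimality of $|H^1(C;A)|$ (a finite $p$-group) and $|N_H(C)/C|$. You have merely written out in more detail the computations the paper states in one line each.
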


\begin{proof}
Let $D=N_H(C)/C$.
Then $H^1(D;H^1(C;A))=0$,
since $H^1(C;A)$ is a finite $p$-group and the order of $D$ is prime to $p$.
Since $C$ is cyclic and acts on $A$ without fixed points,  $H^2(C;A)=H^0(C;A)=0$.
The LHS Spectral sequence gives $H^2(N_H(C);A)=0$,
and so the projection of $G$ onto $H$ splits over the subgroup $N_H(C)$.
\end{proof}

Suppose that $S$ has a normal subgroup $N$ such that $I(N)$ is abelian and
$G=S/I(N)$ is crystallographic with holonomy $H=S/N$,
and that $H$ has a cyclic Sylow $p$-subgroup $C$.
If $h(S/I(N))=p-1$ then elements of $H$ of order $p$ act on $A=N/I(N)$
without fixed points.
We shall consider each of the cases $H=A_5$,  
$SL(2,5)$, $PSL(2,7)$, 
$SL(2,7)$ and $L_3(2)N2^3$ in turn.
(The Sylow 3- and 7-subgroups of $SL(2,8)$ are cyclic, 
but have fixed points in the representations $\psi_7$ and $\psi_8$,
so Lemma \ref{split normal} does not apply.)

\medskip
\noindent$H=A_5${\bf :}
If $S$ is virtually nilpotent,  $N=\sqrt{S}$, 
$H\cong{A_5}$ and $h(N/I(N))=8$ then 
$\mathbb{Q}N^{ab}=\rho_4\oplus\rho_4$.
Let $C$ be a a Sylow 5-subgroup of $H$.
Then $C\cong{C_5}$  and $N_H(C)\cong{D_{10}}$.
As a $\mathbb{Z}[C]$-module $N/I(N)$ is a direct sum $L\oplus{L'}$, 
where $L$ and $L'$ are irreducible and of rank 4 as abelian groups.
Hence they are each either the augmentation ideal in $\mathbb{Z}[C]$
or its $\mathbb{Z}$-linear dual. 
In either case,  $C$ acts  on $N/I(N)$ without fixed points,
and so $D_{10}<S/I(N)$,
by Lemma \ref{split normal}.
There are no  5-dimensional Bieberbach groups with holonomy
mapping onto $D_{10}$ \cite{CS},
and so we cannot have $h(I(N))\leq5$.
Hence we may exclude the cases with $S/\sqrt{S}\cong{A_5}$ and $m=8$,
excepting perhaps when $h(S)=14$ and $[m,n]=[8,1]$ or [8,6].
However the case $[m,n]=[8,1]$ follows on first using Lemma \ref{split normal}
to show that $\overline{S}=S/\tilde\gamma_3\sqrt{S}$ 
has a subgroup isomorphic to $D_{10}$.

We may also sharpen one part of Theorem \ref{vsolvthm}.

\begin{theorem}
\label{A5h>9}
If $S/\widetilde{S}\cong{A_5}$ then $h(S)\geq10$.
\end{theorem}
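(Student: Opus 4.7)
By Theorem \ref{vsolvthm}, $h(S)\geq 9$ whenever $S$ is not virtually nilpotent; and by Theorem \ref{vmetab}, combined with the exclusions of types $[5,4]$, $[8,4]$, and $[9,4]$ via Lemma \ref{I(N)<zeta}, the only virtually nilpotent type with $H = A_5$ and $h(\sqrt{S}) = 9$ is $[8,1]$. The plan is to suppose for contradiction that $h(S) = 9$ and to derive $5$-torsion in $S$ in both the virtually nilpotent and the non-virtually-nilpotent sub-cases.

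The central case is virtually nilpotent of type $[8,1]$. Here $N = \sqrt{S}$ is $2$-step nilpotent with $\mathbb{Q}N^{ab} \cong 2\rho_4$ (the unique rank-$8$ faithful rational $A_5$-module with no trivial summand such that $\mathbf{1}$ occurs in its exterior square) and $\mathbb{Q}N_{2/3} \cong \mathbf{1}$, so $I(N) \cong \mathbb{Z}$ lies in $\zeta S$, and $G = S/I(N)$ is crystallographic of dimension $8$ with holonomy $A_5$ acting via $2\rho_4$. A $5$-cycle acts on $\rho_4$ with the four primitive fifth roots of unity as eigenvalues, and thus without fixed points on $2\rho_4$; Lemma \ref{split normal} then yields a subgroup $D \leq G$ isomorphic to $N_{A_5}(\mathbb{Z}/5) \cong D_{10}$. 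The preimage $\tilde D$ of $D$ in $S$ is a torsion-free central extension $1\to\mathbb{Z}\to\tilde D\to D_{10}\to 1$. Since $D_{10}^{ab} \cong \mathbb{Z}/2$ and $D_{10}$ has trivial Schur multiplier, universal coefficients gives $H^2(D_{10};\mathbb{Z}) \cong \mathbb{Z}/2$, and by coprimality the restriction $H^2(D_{10};\mathbb{Z}) \to H^2(\mathbb{Z}/5;\mathbb{Z}) \cong \mathbb{Z}/5$ is zero. Hence the preimage of $\mathbb{Z}/5 \leq D$ in $\tilde D$ must be the split extension $\mathbb{Z} \times \mathbb{Z}/5$, which has $5$-torsion---contradicting the torsion-freeness of $S$.

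For the non-virtually-nilpotent case with $h(S) = 9$, I will adapt the argument of Theorem \ref{vsolvthm} for $A_5$. The constraints force $V$ to be abelian of rank $4$ or $5$, carrying an effective $A_5$-action of $\mathbb{Q}$-type $\rho_4$ (plus, in rank $5$, possibly a trivial summand, which I can exclude by applying Lemma \ref{HS?} to a suitable crystallographic quotient together with the perfectness of $S$). Lifting a $5$-cycle $s\in A_5$ to $s'\in S$ and replacing by an appropriate power so that its image in $S/V$ has order exactly $5$, the subgroup $\langle V,s'\rangle \leq S$ is a torsion-free extension of $\mathbb{Z}/5$ by a $\mathbb{Z}/5$-module whose $\mathbb{Q}$-type is $\rho_4$. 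Since the $5$-cycle acts on $\rho_4$ without fixed points, both the invariants and the norm vanish integrally, so $H^2(\mathbb{Z}/5;\rho_4) = 0$; the extension must therefore split, producing $5$-torsion in $S$---again a contradiction.

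The hardest step will be cleanly eliminating the rank-$5$ sub-case in the non-virtually-nilpotent setting, where locating the right crystallographic quotient of $S$ on which to apply Lemma \ref{HS?} requires some care. Once that is done, the cohomological vanishing $H^2(\mathbb{Z}/5;\rho_4) = 0$, paired with the vanishing of the restriction $H^2(D_{10};\mathbb{Z}) \to H^2(\mathbb{Z}/5;\mathbb{Z})$, produces the required $5$-torsion uniformly.
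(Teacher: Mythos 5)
Your virtually nilpotent branch is sound and is essentially the paper's own argument: the reduction to type $[8,1]$, the identification $\mathbb{Q}N^{ab}\cong2\rho_4$ so that the Sylow $5$-subgroup acts without fixed points, Lemma \ref{split normal} producing $D_{10}\leq S/I(N)$, and the coprimality argument forcing the preimage of $\mathbb{Z}/5$ to split over the central copy of $\mathbb{Z}$. (The paper phrases the last step as the non-existence of torsion-free extensions of $D_{10}$ by abelian groups of small rank; your explicit computation of $H^2(D_{10};\mathbb{Z})\cong\mathbb{Z}/2$ and its vanishing restriction to $C_5$ is a correct instance of this in the rank-one case.)

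The non-virtually-nilpotent branch has a genuine gap. You assert that $V$ carries ``an effective $A_5$-action of $\mathbb{Q}$-type $\rho_4$'', but no finite group, let alone $A_5$, acts on $V$ here: the conjugation action of $S$ on $\mathbb{Q}\otimes V^{ab}$ factors through the infinite virtually abelian group $S/V$, and since $S$ is not virtually nilpotent the free abelian subgroup $A=W/V$ acts non-trivially (Lemma \ref{vHall}); the relevant structure is the $\mathbb{Q}[A]$-module decomposition over $Supp(M)$, not a rational representation of $A_5$. For the same reason Lemma \ref{HS?} cannot be applied to kill a trivial summand of $V$: that lemma concerns the translation lattice of a crystallographic quotient, and $V/I(V)$ is such a lattice only when $S/V$ is finite, i.e.\ only when $S$ is virtually nilpotent. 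This matters because in the main sub-case $h(S/U)=4$ one has $h(V)=5$, and a faithful action of an order-$5$ element on a rank-$5$ lattice necessarily has a rank-one fixed sublattice, so $H^2(\mathbb{Z}/5;V)$ need not vanish; torsion-free extensions of $\mathbb{Z}/5$ by $\mathbb{Z}^5$ do exist (e.g.\ the Bieberbach group built from the permutation representation and a generator of $H^2(\mathbb{Z}/5;\mathbb{Z}^5)\cong\mathbb{Z}/5$). So your $C_5$-splitting argument cannot close the rank-$5$ case, which is the central one. The paper instead uses the full normalizer: Lemma \ref{split normal} gives $D_{10}\leq S/U$, and the classification of low-dimensional Bieberbach groups \cite{CS} shows there is no torsion-free extension of $D_{10}$ by an abelian group of rank $5$ --- the dihedral involution, which inverts the $5$-element, is what kills the otherwise surviving class. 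You also omit the sub-case in which $\sqrt{S}/V$ is infinite, where the paper argues quite differently: a finite perfect subgroup of $S/\sqrt{S}$ pulls back to a virtually nilpotent TFNS group of Hirsch length at most $8$, contradicting the results of \S7.
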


\begin{proof}
Let $V\leq{U}\leq\widetilde{S}$ be normal subgroups of $S$ 
as in Lemma \ref{overline}.
Suppose that $h(S)<10$. 
We may assume that $h(S)=9$ and that $S$ is not virtually nilpotent, 
by Theorem \ref{vsolvthm} and the results of \S8 above.
Moreover $h(U)\geq4$, with equality only if $U$ is virtually abelian,
by Lemma \ref{gl(10,Q)}.
Hence $h(S/U)=4$ or 5,  and so $S/U$ has 5-torsion \cite{Lu23}.

If $h(S/U)=5$ then $h(U)=4$.
Since $h(\zeta\sqrt{U})>0$,  
it follows from Lemma \ref{gl(10,Q)} and the subsequent remark 
that $\sqrt{U}$ is abelian.
Hence $\sqrt{U}$ is central in $S$,
by Lemma \ref{I(N)<zeta}, and so $S$ is virtually nilpotent.

Therefore $h(S/U)=4$ and $h(U)=5$.
The Sylow 5-subgroups of $A_5$ are cyclic and have normalizer $D_{10}$.
Hence $S/U$ has a subgroup isomorphic to $D_{10}$, 
by Lemma \ref{split normal}.
Let $X$ be the preimage of this subgroup in $S$.
There are no  5-dimensional Bieberbach groups with holonomy $X/V$
mapping onto $D_{10}$ \cite{CS},
and it follows easily that there are no torsion free extensions of 
$X/V$ by an abelian group of rank 5.
Therefore $V$ cannot be abelian of rank 5, and so $I(V)\not=1$.

If $h(I(V))=1$ then $I(V)$ is central in $S$,  and so $V$ is nilpotent.
Since $S$ is not virtually nilpotent and $S/U$ has no nontrivial normal subgroup
of infinite index,  $\sqrt{S}\leq{U}$.
Since $V\leq\sqrt{S}$ and $U/V$ acts effectively on $V/I(V)$ we have $V=\sqrt{S}$.
Since $S$ is not virtually nilpotent $V/I(V)$ is not central in $S/I(V)$.
Hence the subgroup $D_{10}$ of $S/U$ must act effectively.
The action is compatible with the rational commutator pairing.
Since $V$ is nonabelian the radical $R(V)$ has rank 2 or 0.
It is easily seen that $R(V)$ must be 0, and so the pairing is nondegenerate.
Hence the action of $D_{10}$ is symplectic.
But $D_{10}$ is not a subgroup of  $Sp(4,\mathbb{Q})$ \cite[Chapter 4]{Ki}.
Therefore there is no such group.

If $h(I(V))>1$ then $h(V/I(V))<4$,
and so $V/I(V)$ is central in $S/I(V)$, by Lemma \ref{I(N)<zeta}.
Hence $V=U$, since $U/V$ acts effectively on $V/I(V)$.
Since $S$ is not virtually nilpotent, 
$I(V)$ must be abelian of rank 4 and so $V/I(V)$ is abelian of rank 1,
by the argument of Lemma \ref{gl(10,Q)}.
Hence $X/I(V)$ is a central extension of $D_{10}$ by a torsion free abelian group 
of rank 1.
Since $H^2(D_{10};\mathbb{Z})\cong\mathbb{Z}/2\mathbb{Z}$, 
this extension splits over $C_5$ and so $X/I(V)$ has a subgroup of order 5.
Since $S$ is torsion free and $I(V)$ is abelian of rank 4, 
this subgroup of order 5 must act trivially on $I(V)$.
But then $I(V)$ is central, and $S$ is virtually nilpotent.
Thus we again reach a contradiction.
\end{proof}

\noindent$H=SL(2,5)${\bf :}
Consideration of the character tables shows that
the faithful 8-dimensional representations of $SL(2,5)$ 
each restrict to fixed-point free representations of the Sylow 5-subgroups.
Hence the corresponding crystallographic groups each have subgroups isomorphic 
to  the normalizers of these subgroups, which are metacyclic of order 20.
(These are the Borel subgroups of $SL(2,5)$.)
No 6-dimensional Bieberbach group has such a holonomy group  \cite{CS},
and so we may exclude the cases with $S/\sqrt{S}\cong{SL(2,5)}$ and $m=8$.
Similarly for the faithful 12-dimensional representations of $SL(2,5)$,
and for the 12-dimensional representations of $A_5$ with character $3\rho_4$.

If $S/\sqrt{S}\cong{SL(2,5)}$ and $N=\sqrt{S}$ is of type [13,1] then
$\mathbb{Q}N^{ab}\cong\pi_{8i}\oplus\hat\rho_5$,  for some $i=a$ or $b$,
and $SL(2,5)$ acts symplectically.
Hence $h(\zeta{N})=6$.
Let $\overline{N}$ be the quotient of $N/\zeta{N}$ by its torsion subgroup.
Then $\mathbb{Q}\overline{N}\cong\pi_{8a}$ or $\pi_{8b}$,
and so $SL(2,5)$ acts effectively on $\overline{N}$.
As before, $S/\zeta{N}$ must have a subgroup isomorphic to a Borel
subgroup of $SL(2,5)$.
Since $S$ is torsion free, we may exclude this case also.

\medskip
\noindent$H=PSL(2,7)${\bf :}
Lemma \ref{split normal} also applies when $S/\sqrt{S}\cong{PSL(2,7)}$,
$h(S/I(\sqrt{S}))=6$ and $p=7$, 
with $D=M_{7,3}$, the metacyclic group of order 21.
(This is the image of a Borel subgroup of $SL(2,7)$.)
R. Lutowski has used CARAT to verify that $M_{7,3}$ is not 
the group of any 8-dimensional Bieberbach group \cite{Lu23}.
This shall enable us to substantially reduce the role of $PSL(2,7)$
in answering our question.

\begin{theorem}
\label{no7}
If $S/\sqrt{S}\cong{PSL(2,7)}$,
$h(S/I(\sqrt{S}))=6$ and $I(\sqrt{S})\cong\mathbb{Z}^n$, for some $n$,
then $h(S)\geq15$.
\end{theorem}

\begin{proof}
The group $S/I(\sqrt{S})$ has a subgroup $L\cong{M_{7,3}}$, 
by Lemma \ref{split normal}.
Let $W$ be the preimage of $L$ in $S$.
No extension of $L$ by $\mathbb{Z}$ or $\mathbb{Z}^2$ is torsion free.
Hence $n\geq6$ and $\mathbb{Q}\sqrt{S}_{2/3}$ has a summand
which is a faithful $S/\sqrt{S}$-representation.
Hence $L$ acts effectively on $I(\sqrt{S})\cong\mathbb{Z}^n$,
and so $W$ is a Bieberbach group.
Since $M_{7,3}$ is not the group of any 8-dimensional Bieberbach group,
$n\geq9$,  and so $h(S)\geq15$.
\end{proof}

In particular,  if $S/\sqrt{S}\cong{PSL(2,7)}$
then $[m,n]\not=[6,1]$,  [6,2],  [6,6], [6,7], or [6,8].
An argument parallel to the one above for the case with $H\cong{A_5}$ and 
$h(S/I(\sqrt{S}))=8$ shows that we may extend Theorem \ref{no7} to exclude 
the cases with $S/\sqrt{S}\cong{PSL(2,7)}$ and $[m,n]=[12,1]$ or [12,2].
(If  $m=12$ then 
$\mathbb{Q}\sqrt{S}^{ab}=\tau_{6i}\oplus\tau_{6j}$ for some $i,j\in\{a,b\}$.)

Theorem \ref{no7} may be extended to the case when $S$ is not virtually nilpotent.
We shall use the result of Lutowski \cite{Lu23} again, 
together with the simpler observation
that 9 is the smallest dimension of a Bieberbach group with holonomy
cyclic  of order 21 \cite{KP}.
The argument for the next theorem is parallel to that for Theorem \ref{A5h>9}.

\begin{theorem}
\label{vmetabPSL(2,7)}
Let $S$ be a minimal TFNS group with $S/\widetilde{S}\cong{PSL(2,7)}$.
Then $h(S)\geq14$.
\end{theorem}

\begin{proof}
There are normal subgroups $V\leq{U}<S$ such that $S/U$ is crystallographic
and $U/V$ is a finite solvable group which acts effectively on $V/I(V)$,
by Lemma \ref{overline}.
We may assume that $S$ is not virtually nilpotent,
by the observations following Theorem \ref{no7}.
We may assume also that $h(S)=6$, for otherwise $h(S)\geq14$,
by Theorem \ref{vsolvthm}.
Then $S/U$ has  a subgroup isomorphic to $M_{7,3}$, 
by Lemma \ref{split normal}.
Let $X$ be the preimage of this subgroup in $S$.
Then $X/V$ is a finite solvable group.

Suppose that $V$ is abelian and $h=h(V)<9$.
Then $X/V$ acts effectively on $V$,
for otherwise some element of $X$ with non-trivial image in $S/U$ would centralize $V$.
But any such element must map non-trivially to the simple quotient $PSL(2,7)$,
and it would follow that $V$ must be central in $S$.
In particular, $S$ would be virtually nilpotent, 
and the action of the holonomy on $V$ would be trivial.
Hence $U=V\cong\mathbb{Z}$ or $\mathbb{Z}^2$, 
since $h(S/U)=6$ and $S/U$ has holonomy $PSL(2,7)$.
But no extension of a finite nonabelian group by $\mathbb{Z}$ or $\mathbb{Z}^2$
is torsion free.
Hence we may assume that $X/V$ embeds in $GL(8,\mathbb{Q})$.

Since $X/V$ is finite it preserves a lattice $L\leq{V}$.
Hence $X$ has a finitely generated subgroup $W$ which is an extension of $X/V$
by $L\cong\mathbb{Z}^h$.
If $W$ were torsion free then this subgroup would be a Bieberbach group 
of dimension $h\leq8$, and with holonomy of order a multiple of 21,
since $X/V$ acts effectively and maps onto $M_{7,3}$.
But there are no such Bieberbach groups. 
Hence either $I(V)\not=1$ or $h(S)\geq15$.

Thus we may assume that $h(S)=13$ and $I(V)\not=1$.
The argument for Lemma \ref{gl(10,Q)} may be used to show 
that $\sqrt{S}$ is abelian of rank 6.
Hence $h(V/I(V))=1$, and so $U=V$, by Corollary 9.
Since $Aut(V/I(V))$ is abelian, $V/I(V)$ is central in $S/I(V)$,
and so $X/I(V)$ is a central extension of $M_{7,3}$ 
by a torsion free abelian group of rank 1.
Since $H^2(M_{7,3};\mathbb{Z})\cong\mathbb{Z}/3\mathbb{Z}$, 
this extension splits over $C_7$.
Hence $X/I(V)$ has a subgroup of order 7.
This subgroup of order 7 must act trivially on $I(V)$,
since $S$ is torsion free and $I(V)$ is abelian of rank 6.
But then $I(V)$ is central in $S$, and $S$ is virtually nilpotent.
This contradicts our earlier work, and so we cannot have $h(S)<14$.
\end{proof}

The argument of the final paragraph  leads to a similar contradiction if $I(V)$ 
is abelian of rank 6, $V/I(V)$ has rank 2  and $h(S)=14$. 


\medskip
\noindent$H=SL(2,7)$ or $L_3(2)N2^3${\bf :}
Every 8-dimensional crystallographic group with holonomy $SL(2,7)$
is a semidirect product $\mathbb{Z}^8\rtimes_\theta{SL(2,7)}$,
for some effective action $\theta$ \cite[page 295]{HP},
and so we may exclude the case with $S/\sqrt{S}\cong{SL(2,7)}$ and $[m,n]=[8,1]$.
Similarly, the cohomology classes corresponding to extensions
of $L_3(2)N2^3$ by $\mathbb{Z}^8$ which are crystallographic 
have order $\leq2$ \cite[page 298]{HP}.
Hence such extensions split over subgroups of $L_3(2)N2^3$ of odd order.
Since $M_{7,3}$ is such a subgroup, 
and is not the holonomy of a 7-dimensional Bieberbach group,
we may exclude the case with $S/\sqrt{S}\cong{L_3(2)N2^3}$ and $[m,n]=[7,7]$.

\medskip
In the light of the above arguments we find that in all cases
${h(S)\geq10}$ and $S/\widetilde{S}\cong{A_5}$,  $PSL(2,7)$ or $SL(2,8)$.
If $S$ is virtually nilpotent and $h(S)\leq14$ then the list of possibilities for 
the type $[m,n]$ and $H=S/\sqrt{S}$ reduces to

$[m,n]=[4,6]$.  $H\cong{A_5}$ and $\mathbb{Q}\sqrt{S}_{3/4}=\rho_4$ or 0.

$[m,n]=[5,6]$.  $H\cong{A_5}$. 

$[m,n]=[6,4]$.   $H\cong{A_5}$ and $\mathbb{Q}\sqrt{S}_{3/4}=\rho_4$ or 0.

$[m,n]=[6,5]$.   $H\cong{A_5}$.

$[m,n]=[6,6]$.  $H\cong{A_5}$ and $\mathbb{Q}\sqrt{S}_{3/4}=\mathbf{2}$, 
$\mathbf{1}$ or 0.

$[m,n]=[7,7]$.  $H\cong{PSL(2,7)}$.

$[m,n]=[8,6]$.  $H\cong{A_5}$.

$[m,n]=[8,6]$.  $H\cong{PSL(2,7)}$.

$[m,n]=[9,5]$.  $H\cong{A_5}$.

$[m,n]=[10,1]$.  $H\cong{A_5}$ and $\mathbb{Q}\sqrt{S}^{ab}\cong\rho_5\oplus\rho_5$.

$[m,n]=[10,4]$.  $H\cong{A_5}$.

$[m,n]=[12,1]$.  $H\cong{A_5}$, $\mathbb{Q}\sqrt{S}^{ab}\cong\rho_6\oplus\rho_6$
and $\mathbb{Q}\sqrt{S}_{3/4}=\mathbf{1}$ or 0.

$[m,n]=[12,2]$.  $H\cong{A_5}$ and $\mathbb{Q}\sqrt{S}^{ab}\cong\rho_6\oplus\rho_6$.

$[m,n]=[13,1]$.  $H\cong{A_5}$ or $PSL(2,7)$.\\
Thus if $S$ is virtually nilpotent and $h(S)<14$ then $S/\sqrt{S}\cong{A_5}$,
and $\tilde\gamma_3\sqrt{S}=1$ in all cases except for $[m,n]=[4,6]$, [6,4], [6,6]
or [12,1].

If $S$ is not virtually nilpotent and $H\cong{PSL(2,7)}$  
or $H\cong{SL(2,8)}$ then $h(S)\geq14$, 
by Theorems \ref{vsolvthm} and \ref{vmetabPSL(2,7)}.
In the latter case $S$ is not virtually polycyclic, by Theorem \ref{vpcnonvnil}.

We cannot exclude type $[4,6]$ by arguments involving just the lower central series.
Let $\{w,x,y,z\}$ be a basis for $F(4)$,
and define endomorphisms $\sigma$ and $\tau$ by
$\sigma(w)=w^{-1}$, $\sigma(x)=wxy$, $\sigma(y)=y^{-1}$,
$\sigma (z)=yz$ and $\tau(w)=x^{-1}$, $\tau(x)=xyz$, $\tau(y)=z^{-1}$ and
$\tau(z)=(wxy)^{-1}$.
These are automorphisms,
since $\sigma^2=\tau^3=(\sigma\tau)^5=1$,
and define a monomorphism $\theta:A_5\to{Aut(F(4))}$,
with rational abelianization $\rho_4$ in $GL(4,\mathbb{Q})$.
Let $S$ be an extension of $A_5$ by $N=F(4)/\gamma_3F(4)$,
with action $\theta$.
Then $\sqrt{S}=F(4)/\gamma_3F(4)$ is of type [4,6].

If $\alpha:H\to{Aut(N)}$ is a homomorphism then the semidirect product
$N\rtimes_\alpha{H}$ is a basepoint for the set of extensions of $H$ by $N$ with
outer action corresponding to $\alpha$, and so determines a natural bijection from
$H^2(H;\zeta{N})$ to the set of such extensions.
The restriction of an extension $\xi$ to a subgroup $J<H$ splits
if and only if the corresponding cohomology class $c(\xi)$ restricts to 0
in $H^2(J;\zeta{N}^{\alpha|_J})$.
(This is not clear if the outer action does not factor though $Aut(N)$!)

We may apply this observation to $S$ and to $J=A_4<A_5$.
Since $H^2(A_5;\zeta\sqrt{S})$ has order 5 \cite[page 273]{HP} and $(|A_4|,5)=1$, 
the preimage of $J$ in $S$ is a semidirect product,
and so $S$ has torsion.
Taking into account the Jacobi identities,
we see that $\mathbb{Q}F(4)_{3/4}\cong\rho_4\oplus2\rho_5\oplus\rho_6$,
and so $F(4)$ has a canonical $\theta$-invariant subgroup $K$ such that
$\gamma_4F(4)<K<\gamma_3F(4)$ and $h(F(4)/K)=14$.
A similar argument then shows that any extension of $A_5$ by $F(4)/K$ 
with outer action induced by $\theta$ must have 2-torsion.
However we do not know whether such arguments apply
to other virtually nilpotent groups $S$ with $\sqrt{S}$ of type [4,6].

\section{some questions}

1) is there a minimal TFNS group $S$ with  $S/\widetilde{S}\cong{SL(2,8)}$ and 
$h(S)=14$? If so, $S$ must be an extension of a crystallographic group $S/U$ 
with $h(S/U)=7$ by a virtually abelian group $U$ which is not finitely generated.

2) Is there a minimal TFNS group which is an extension of a crystallographic group by 
$\mathbb{Z}$? 
In particular, does every perfect 12-dimensional crystallographic group 
with holonomy $A_5$ have either $D_{10}$ or $A_4$ as a subgroup?

3) Let $S$ be a finitely generated group which is an extension of 
a finite simple group $G$ by a nilpotent group $N$ with torsion free centre $A$, 
and such that $S/A$ is a crystallographic group.
Conjugation in $S$ induces an action $\theta:G\to{Aut}(A)$,
since $A$ is central in $N$.
Suppose $S/A$ has $p$-torsion,
for some prime $p$ which does not divide the order of  $H^2(G;A)$.
Must $S$ have $p$-torsion?
(Note that the class of $S$ as an extension of $S/A$ by $A$ is not induced 
from $H^2_\theta(G;A)$. 
In fact it has infinite order,
since it restricts non-trivially to $H^2(N/A;A)$.)

4) If $H$ is the holonomy group of an $n$-dimensional infranilmanifold 
is it also the holonomy group of a flat $n$-manifold?
This is so if $n\leq4$, by inspection of the known groups.
In general, $H$ is the holonomy of a crystallographic group in dimension $\leq{n}$, 
by Lemma \ref{vnil}.

(The converse fails for $n=4$, since $A_4$ is the holonomy group of a flat 4-manifold,
but not of any other 4-dimensional infranilmanifold.)

5) If $S$ is a minimal TFNS group must it be virtually polycyclic?

6) If $S$ is a minimal TFNS group must it be perfect?

\medskip
If  questions 2) and 3) both have positive answers then there are 
no virtually nilpotent TFNS groups $S$ with $h(S)<14$.

\newpage

\end{document}